\documentclass[paper]{siamart190516}



\usepackage{lipsum}
\usepackage{amsfonts}
\usepackage{graphicx}
\usepackage{epstopdf}
\usepackage{algorithmic}
\usepackage{enumerate}

\ifpdf
  \DeclareGraphicsExtensions{.eps,.pdf,.png,.jpg}
\else
  \DeclareGraphicsExtensions{.eps}
\fi


\newsiamremark{remark}{Remark}
\newsiamremark{hypothesis}{Hypothesis}
\crefname{hypothesis}{Hypothesis}{Hypotheses}
\newsiamthm{claim}{Claim}
\newsiamthm{example}{Example}
\headers{Error bounds for the absolute value equations}{M. Zamani, M. Hlad\'{i}k}

\title{Error bounds and a condition number for the absolute value equations \thanks{Submitted to the editors DATE.
}}

\author{Moslem Zamani\thanks{Department of Applied Mathematics, Ferdowsi University of Mashhad, Iran 
  (\email{moslem.zamani@um.ac.ir}).}
\and Milan Hlad\'{i}k\thanks{Charles University, Faculty  of  Mathematics  and  Physics,
Department of Applied Mathematics, 
Malostransk\'e n\'am.~25, 11800, Prague, Czech Republic,
  (\email{hladik@kam.mff.cuni.cz}).
  }}
\usepackage{amsopn}


\ifpdf
\hypersetup{
  pdftitle={Error bounds and a condition number for the absolute value equations},
  pdfauthor={M Zamani, and J. E. Smith}
}
\fi


\externaldocument{ex_supplement}


\begin{document}

\maketitle

\begin{abstract}
Absolute value equations, due to their relation to the linear complementarity problem, have been intensively studied recently. In this paper, we present error bounds for absolute value equations. Along with the error bounds, we introduce an appropriate condition number. We consider general scaled matrix p-norms, as well as particular p-norms. We discuss basic properties of the condition number, its computational complexity, its bounds and also exact values for special classes of matrices. We consider also matrices that appear based on the transformation from the linear complementarity problem.
\end{abstract}

\begin{keywords}
  Absolute value equation, Error bounds, Condition number, Linear complementarity problem
\end{keywords}

\begin{AMS}
  65G40, 90C33
\end{AMS}
\section{Introduction}
We consider the absolute value equation problem of finding an $x\in\mathbb{R}^n$ such that
\begin{equation}\tag{AVE}\label{AVE}
Ax-b=|x|,
\end{equation}
where $A\in\mathbb{R}^{n\times n}$, $b\in\mathbb{R}^{n}$ and $|\cdot|$ denotes absolute value. A slightly more generalized form of  \eqref{AVE} was introduced by Rohn \cite{Rohn1}, which is written as 
\begin{equation*}
Ax-b=B|x|,
\end{equation*}
where $B\in\mathbb{R}^{n\times n}$, but we will deal merely with \eqref{AVE}. 

Many methods, including Newton-like methods \cite{Cruz, MangasarianN, Zhang} or concave optimization methods \cite{MangasarianC1, MangasarianC2}, have been developed for  \eqref{AVE}. The important point concerning numerical methods is the precision of the computed solution. To the best knowledge of the authors, there exist only few papers which are devoted to this subject for \eqref{AVE}; for instance see \cite{abda, wang2, wang1}. Wang et al. \cite{wang2, wang1} use interval methods for numerical validation. In addition, some general bounds for the solution set were presented in~\cite{Hla2018b}. One effective method  for numerical validation is error bound method \cite{Pang}. 

Error bounds play a crucial role in theoretical and numerical analysis of linear algebraic and optimization problems \cite{coulibaly, fabian, facchinei2007finite, higham, Pang}.  In this paper, we study error bounds for \eqref{AVE}. 
Indeed, under the assumption guaranteeing unique solvability for each $b\in\mathbb{R}^n$, we compute upper bounds for $\|x- x^\star\|$, the distance to the solution $x^\star$, in terms of a computable residual function. We discuss various kinds of norms and investigate special classes of matrices. 

It is well-known that a linear complementarity problem can be formulated as an absolute value equation \cite{mangasarian2}. In fact, it is one of the main applications of  absolute value equations. In Section~\ref{sLCP}, we study error bounds for absolute value equations obtained by the reformulation of linear complementarity problems. In addition, thanks to the given results, we provide a new error bound for linear complementarity problems. 

The paper is organized as follows. After reviewing terminologies and notations, we investigate error bounds for absolute value equations in Section~\ref{sAVE}. Section~\ref{sLCP} is devoted to linear complementarity problems. We study relative condition number of AVE in Section \ref{S.RC}.

\subsection{Notation}
The $n$-dimensional Euclidean space is denoted by $\mathbb{R}^n$.  Vectors are considered to be column vectors and the superscript $T$ represents the transpose operation. We use $e$ and $I$ to denote the vector of ones and the identity matrix, respectively. We denote an arbitrary scaling $p$-norm on $\mathbb{R}^n$ by $\|\cdot\|$, that is, $\| x\|=\| Dx\|_p$ for a positive diagonal matrix $D$ and a $p$-norm. In particular, $\|\cdot\|_1$, $\|\cdot\|_2$ and $\|\cdot\|_\infty$ stand for 
$1$-norm, $2$-norm and $\infty$-norm, respectively. We use $\sgn(x)$ to denote the sign of $x$.

Let $A$ and $B$ be $n\times n$ matrices. We denote the smallest singular value and the spectral radius of $A$ by $\sigma_{\min}(A)$ and $\rho(A)$, respectively. We use 
$\lambda(A)$ to denote the vector of eigenvalues of a symmetric matrix $A$, and $\lambda_{\min}(A)$ and $\lambda_{\max}(A)$ stand for the smallest and the largest eigenvalue, respectively. For a given norm $\|\cdot\|$ on $\mathbb{R}^n$, $\| A\|$ denotes the matrix norm induced by $\|\cdot\|$, which is defined as 
\begin{align*}
\| A\|=\max\{\|Ax\|: \|x\|=1\}.
\end{align*}
The matrix inequality $A\geq B$, $| A|$ and $\max(A, B)$ are understood entrywise. For $d\in\mathbb{R}^n$, $\diag(d)$ stands for the diagonal matrix  whose entries on the diagonal are the components of $d$. In contrast, $\Diag(A)$ denotes the vector of diagonal elements of  $A$. The $i$th row and $i$th column of $A$ are denoted by $A_{i\ast}$ and $A_{\ast i}$, receptively. 
We denote the comparison matrix $A$ by $\langle A\rangle$, which is defined as
 \begin{align*}
 &  \langle A\rangle _{ii}=|A_{ii}|,  && i=1, ...,n,\\
   & \langle A\rangle _{ij}=-|A_{ij}|,  && i,j=1, ...,n, i\neq j.
 \end{align*}

We recall the following definitions for an $n\times n$ real matrix $A$:
\begin{itemize}
\item
$A$ is a P-matrix if each principal minor of $A$ is positive.
\item
$A$ is an M-matrix if $A^{-1}\geq 0$ and $A_{ij}\leq 0$ for $i, j = 1,2,...,n$ with $i\neq j$.
\item
$A$ is an H-matrix if its comparison matrix is an M-matrix.
\end{itemize}

We will exploit some  results from interval linear algebra, so we recall some results from this discipline. For two $n\times n$  matrices $\unum{A}$ and $\onum{A}$, $\unum{A}\leq\onum{A}$, the interval matrix 
$\bold{A} = [\unum{A}, \onum{A}]$ is defined as $\bold{A}=\{A: \unum{A}\leq A \leq \onum{A}\}$.  
An interval matrix A is called regular if each $A\in\bold{A}$ is nonsingular. Furthermore, we denote and define the inverse of a regular interval matrix $\bold{A}$ as
 $\bold{A}^{-1}:=\{A^{-1}: A\in \bold{A}\}$. Note that the inverse of an interval matrix is not necessarily an interval matrix.

In this paper, generalized Jacobian matrices~\cite{Clarke} are used in the presence of nonsmooth functions. Let $f\colon \R^n \to\R^m$ be a locally Lipschitz function. The generalized gradient of $f$ at $\hat x$, denoted by $\partial f(\hat x)$, is defined as 
\begin{align*}
\partial f(\hat x):=
 \co\{\textstyle\lim_{n\to \infty} \nabla f(x_n): x_n\to \hat x, x_n\notin  X_f\},
\end{align*}
where $ X_f$ is the set of points at which $f$ is not differentiable and $\co(S)$ denotes the convex hull of a set~$S$.

\section{Error bounds for the absolute value equations}\label{sAVE}

Consider an absolute value equation \eqref{AVE}.  It is known that \eqref{AVE} has a unique solution for each $b\in\mathbb{R}^n$ if and only if the interval matrix $[A-I, A+I]$ is regular; see Theorem~3.3 in \cite{Wu}. 
That is why in many statements below, we make an assumption that the interval matrix $[A-I, A+I]$ is regular.
In this case, we denote the unique solution set of  \eqref{AVE} by $x^\star$. 

\begin{theorem}\label{TB}
If the interval matrix $[A-I, A+I]$ is regular, then 
\begin{align}\label{Er}
\|x- x^\star\| \leq \max_{\|d\|_{\infty}\leq 1} \|(A-\diag(d))^{-1}\|\cdot \|Ax-b-|x|\|, \ \  \forall x\in \mathbb{R}^n.
\end{align}
\end{theorem}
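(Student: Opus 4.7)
The plan is to write the residual $r(x) := Ax - b - |x|$ as a linear image of the error $x - x^\star$. Since $x^\star$ is the unique solution, $Ax^\star - b - |x^\star| = 0$, so subtracting gives
\begin{equation*}
r(x) = A(x - x^\star) - (|x| - |x^\star|).
\end{equation*}
The remaining task is to absorb the nonsmooth term $|x| - |x^\star|$ into a diagonal matrix with bounded entries.

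The key observation is a componentwise mean value property for the absolute value: for any two reals $a, b$, there exists $\delta \in [-1, 1]$ with $|a| - |b| = \delta(a - b)$. Indeed, this is trivial when $a = b$, and otherwise the ratio $(|a| - |b|)/(a - b)$ lies in $[-1, 1]$ by the reverse triangle inequality $||a| - |b|| \leq |a - b|$. Applying this coordinatewise yields a vector $d \in \mathbb{R}^n$ with $\|d\|_\infty \leq 1$ such that $|x| - |x^\star| = \diag(d)(x - x^\star)$. Substituting back, I get
\begin{equation*}
r(x) = (A - \diag(d))(x - x^\star).
\end{equation*}

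Now I invoke regularity. Since $\|d\|_\infty \leq 1$, the matrix $A - \diag(d)$ differs from $A$ only in the diagonal, and by at most $1$ in each entry, so it belongs to the interval matrix $[A-I, A+I]$. Regularity then gives that $A - \diag(d)$ is invertible, so $x - x^\star = (A - \diag(d))^{-1} r(x)$. Taking the norm $\|\cdot\|$ and bounding by the worst-case $d'$ with $\|d'\|_\infty \leq 1$ delivers
\begin{equation*}
\|x - x^\star\| \leq \|(A - \diag(d))^{-1}\| \cdot \|r(x)\| \leq \max_{\|d'\|_\infty \leq 1} \|(A - \diag(d'))^{-1}\| \cdot \|Ax - b - |x|\|.
\end{equation*}

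I do not anticipate a serious obstacle here: the argument is elementary once the scalar mean value property for $|\cdot|$ is in hand. The only point requiring slight care is confirming that $A - \diag(d)$ genuinely lies in $[A-I, A+I]$ (which is why the interval is taken to be $[A-I, A+I]$ rather than something smaller) so that regularity applies. The existence of the maximum in the bound is also implicit; it is attained because $\{d' : \|d'\|_\infty \leq 1\}$ is compact and $d' \mapsto \|(A - \diag(d'))^{-1}\|$ is continuous on this set under regularity.
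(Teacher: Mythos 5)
Your proof is correct, and it reaches the same pivotal identity as the paper---namely $Ax-b-|x| = \hat A(x-x^\star)$ for some $\hat A$ in the interval matrix $[A-I,A+I]$---but by a genuinely more elementary route. The paper applies the mean value theorem for locally Lipschitz maps to the residual $\phi(x)=Ax-b-|x|$, obtaining a convex combination $\sum_i\lambda_i A_i$ of generalized Jacobians $A_i\in\partial\phi(x_i)\subseteq[A-I,A+I]$, and then uses convexity of $\{A+\diag(d):\|d\|_\infty\leq 1\}$ to collapse that combination into a single $\hat A$. You instead exploit the scalar identity $|a|-|b|=\delta(a-b)$ with $\delta\in[-1,1]$ (valid by the reverse triangle inequality), applied coordinatewise to produce $|x|-|x^\star|=\diag(d)(x-x^\star)$ with $\|d\|_\infty\leq 1$ directly. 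This sidesteps the Clarke calculus entirely and makes the membership $\hat A = A-\diag(d)\in[A-I,A+I]$ explicit rather than inferred from $\partial\phi(y)\subseteq[A-I,A+I]$; in fact the paper itself uses your trick later, in the stability proposition, where it writes $|x_2|-|x_1|=D(x_2-x_1)$ for some $D\in[-I,I]$. What the paper's heavier machinery buys is a template that generalizes to residuals whose nonsmooth part is not coordinatewise separable; for this particular statement your argument is shorter and self-contained, and your closing remarks on why $A-\diag(d)$ is invertible and why the maximum is attained are both accurate.
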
 

\begin{proof}
Note that due to regularity of $[A-I, A+I]$ the right side of the above inequality is finite. Define the residual function
$\phi\colon \R^n\to \R^n$ by $\phi(x)=Ax-b-|x|$. By the mean value theorem, see Theorem~8 in~\cite{Hirr},
\begin{align*}
\phi(x)=\phi(x)-\phi(x^\star)=(\textstyle\sum_{i=1}^n \lambda_i A_i)(x- x^\star),
\end{align*}
where $A_i\in \partial  \phi(x_i)$, $x_i\in co(\{x, x^\star\})$, $\lambda_i\geq 0$, $i=1, ..., n$ and $\sum_{i=1}^n \lambda_i=1$. It is easily seen that  $\partial\phi(y)\subseteq \{A+\diag(d): \|d\|_{\infty}\leq 1\}=[A-I, A+I]$ for $y\in\mathbb{R}^n$. Due to the convexity of $\{A+\diag(d): \|d\|_{\infty}\leq 1\}$, we have 
\begin{align*}
\phi(x)=\hat A(x- x^\star),
\end{align*}
for some $\hat A\in [A-I, A+I]$. By multiplying $\hat A^{-1}$ on the both sides and using induced norms property, we obtain
\begin{align*}
\|x- x^\star\|=\|\hat A^{-1}\phi(x)\|\leq \|\hat A^{-1}\| \|\phi(x)\|\leq \max_{\|d\|_{\infty}\leq 1} \|(A-\diag(d))^{-1}\|\|\phi(x)\|,
\end{align*}
which completes the proof.
\end{proof}

To take advantage of this formulation, we need to compute the optimal value of the following optimization problem,
\begin{align}\label{ONEr}
c(A):=\max \ & \|(A-\diag(d))^{-1}\| \ \
s.t. \ \ \|d\|_{\infty}\leq 1.
\end{align}
We call the optimal value of \eqref{ONEr} \emph{the condition number} of the absolute value equation~\eqref{AVE} with respect to the norm $\|\cdot\|$. In addition, we denote \emph{the condition number} with respect to the $1$-norm, $2$-norm and $\infty$-norm by $c_1(A)$, $c_2(A)$ and $c_\infty(A)$, respectively.
By properties of matrix norms, we have the following results.

\begin{proposition}
Let $[A-I, A+I]$ be regular and $\alpha$ be a scalar with $|\alpha|\geq1$. Then, 
\begin{enumerate}[i)]
\item
$c(-A)=c(A)$;
\item
$c_1(A^T)=c_\infty(A)$;
\item
$c(\alpha A)\leq |\alpha^{-1}| c(A)$.
\end{enumerate}
\end{proposition}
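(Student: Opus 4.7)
The plan is to establish each item by rewriting the matrix inside the inverse in a form compatible with the definition of $c(A)$, then performing an appropriate change of variable on $d$ (and, in (ii), exploiting the symmetry of $\diag(d)$ together with a standard duality between the $1$ and $\infty$ matrix norms). All three parts follow the same template: manipulate the matrix algebraically and track how the feasible set $\{d:\|d\|_{\infty}\le 1\}$ transforms under the substitution.

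For (i), the identity $-A-\diag(d)=-(A+\diag(d))$ gives $\|(-A-\diag(d))^{-1}\|=\|(A+\diag(d))^{-1}\|$, and the substitution $d'=-d$ leaves the feasible set invariant and turns this into $\|(A-\diag(d'))^{-1}\|$; maximising over $d'$ returns $c(A)$. For (ii), I would use the standard duality $\|M\|_1=\|M^T\|_{\infty}$ together with the identity $(A^T-\diag(d))^{-1}=((A-\diag(d))^{-1})^T$, which holds because $\diag(d)$ is symmetric. These two facts immediately convert $\|(A^T-\diag(d))^{-1}\|_1$ into $\|(A-\diag(d))^{-1}\|_{\infty}$, and since the feasible set is the same on both sides, the maxima agree.

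For (iii), I would factor $\alpha A-\diag(d)=\alpha(A-\diag(\alpha^{-1}d))$ to obtain $(\alpha A-\diag(d))^{-1}=\alpha^{-1}(A-\diag(\alpha^{-1}d))^{-1}$, which pulls $|\alpha|^{-1}$ out of the norm. Setting $d'=\alpha^{-1}d$ then yields
\[
c(\alpha A)=|\alpha|^{-1}\max\bigl\{\|(A-\diag(d'))^{-1}\|:\|d'\|_{\infty}\le |\alpha|^{-1}\bigr\}.
\]
The assumption $|\alpha|\ge 1$ now enters decisively: it guarantees $\{d':\|d'\|_{\infty}\le|\alpha|^{-1}\}\subseteq\{d':\|d'\|_{\infty}\le 1\}$, so the maximum is bounded above by $c(A)$, yielding $c(\alpha A)\le |\alpha^{-1}|\,c(A)$. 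This inclusion is the only genuine subtlety in the whole proposition: without the hypothesis $|\alpha|\ge 1$ the set inclusion would reverse, and one would obtain a bound in the opposite direction rather than the stated inequality.
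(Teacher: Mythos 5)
Your proof is correct and follows essentially the same route as the paper: for part iii) the paper performs the identical change of variables (replacing $d$ by $\alpha d$, factoring out $\alpha^{-1}$, and using that the shrunken box $\{d:\|d\|_\infty\le|\alpha^{-1}|\}$ is contained in $\{d:\|d\|_\infty\le 1\}$ since $|\alpha|\ge 1$). The paper dismisses parts i) and ii) as straightforward; your explicit arguments for them (negation invariance of the norm with $d'=-d$, and the duality $\|M^T\|_1=\|M\|_\infty$ combined with $(A^T-\diag(d))^{-1}=((A-\diag(d))^{-1})^T$) are exactly the intended ones.
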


\begin{proof}
Part i) and ii) are straightforward. Part iii) follows form the fact that
\begin{align*}
\max_{\|d\|_{\infty}\leq 1} \  \|(\alpha A-\diag(d))^{-1}\|&= \max_{\|d\|_{\infty}\leq |\alpha^{-1}|} \  \|(\alpha A-\alpha \diag(d))^{-1}\|\\
& \leq |\alpha^{-1}|\max_{\|d\|_{\infty}\leq 1} \  \|( A-\diag(d))^{-1}\|.
\end{align*}
\end{proof}

In the next proposition, we show that optimization problem \eqref{ONEr} attains its minimum at some vertices of the box $\{d:\|d\|_\infty\leq 1\}=[-e,e]$. 

\begin{proposition}\label{Pr1}
Let  the interval matrix $[A-I, A+I]$ be regular. Then, there exists a vertex of polytope $\{d:\|d\|_\infty\leq 1\}$ which is  a solution of \eqref{ONEr}.
\end{proposition}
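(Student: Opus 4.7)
The plan is to reduce to a coordinate-wise statement: for any fixed $x$ with $\|x\|=1$ and any fixed components $d_j$ ($j\neq i$), the scalar function $t \mapsto \|(A-\diag(d))^{-1}x\|$ with $d_i=t$ attains its maximum on $[-1,1]$ at an endpoint. Once this is established, an iterative rounding argument converts any maximizer of \eqref{ONEr} into a vertex of $[-e,e]$ without decreasing the objective.

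To prove the coordinate-wise claim I would invoke the Sherman--Morrison formula. Setting $B := A - \sum_{j\neq i} d_j e_j e_j^T$ and $\alpha := e_i^T B^{-1} e_i$, we have $A-\diag(d) = B - t e_i e_i^T$ and
\begin{align*}
(B - t e_i e_i^T)^{-1} x \;=\; B^{-1}x \;+\; \frac{t\, e_i^T B^{-1}x}{\,1 - t\alpha\,}\, B^{-1}e_i.
\end{align*}
Regularity of $[A-I,A+I]$ ensures that $B - t e_i e_i^T$ is invertible for every $t \in [-1,1]$, so $1-t\alpha$ does not vanish on this interval and the M\"obius function $\phi(t) := t/(1-t\alpha)$ is continuous and strictly monotonic there. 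Hence $t\mapsto (B-te_ie_i^T)^{-1}x$ is affine in the single variable $\phi(t)$, and the norm, being convex in that affine argument, attains its maximum over $\phi([-1,1])$ at an endpoint; monotonicity of $\phi$ pulls this back to $t=\pm 1$. The degenerate cases $\alpha=0$ or $e_i^T B^{-1}x=0$ make the map affine or constant in $t$, so the same conclusion holds trivially.

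With the coordinate-wise claim in hand, let $d^\star \in [-e,e]$ attain the maximum in \eqref{ONEr}; existence follows from continuity of $d\mapsto\|(A-\diag(d))^{-1}\|$ on the compact box $[-e,e]$, which itself is a consequence of regularity. Fix a unit vector $x^\star$ attaining the induced matrix norm at $d^\star$. The coordinate-wise claim, applied to $i=1$, yields $t_1\in\{-1,+1\}$ such that replacing $d^\star_1$ by $t_1$ does not decrease $\|(A-\diag(\cdot))^{-1}x^\star\|$, and hence does not decrease the matrix norm either; so the new vector $d^1$ is still a maximizer. Choose a fresh unit maximizer at $d^1$ and repeat the procedure on coordinate $2$, and so on. After $n$ rounds every coordinate equals $\pm 1$, the matrix norm has not decreased, and we have produced a vertex of $[-e,e]$ at which the maximum is attained.

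The main obstacle is establishing the coordinate-wise endpoint-maximum claim cleanly, particularly handling the degenerate configurations without breaking the affine-in-$\phi(t)$ picture; the rest is a standard swap-and-reoptimize move in which the optimizing $x$ is refreshed after each coordinate is pinned. Once Sherman--Morrison reduces the dependence on a single coordinate to a M\"obius rescaling of an affine path, convexity of the norm closes the argument.
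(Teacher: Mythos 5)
Your proof is correct and follows essentially the same route as the paper: Sherman--Morrison reduces the dependence on a single coordinate to a M\"obius reparametrization of an affine path, convexity of the norm pushes the maximum to $t=\pm 1$, and a coordinate-by-coordinate rounding finishes the argument. The only (harmless) difference is that you work with $\|(A-\diag(d))^{-1}x\|$ for a fixed maximizing unit vector $x$ and then bridge back to the induced matrix norm, whereas the paper applies the same convexity argument to the matrix norm directly; your phrasing in terms of ``maximum of a convex function over the interval $\phi([-1,1])$'' is in fact slightly cleaner than the paper's claim that the composition $f$ is itself convex.
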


\begin{proof}
We will show that problem  \eqref{ONEr} has a solution whose components are  either one or minus one. As the feasible set is compact, problem  \eqref{ONEr} attains its maximum. Let $\hat d$ be an optimal solution. If $\hat d$ is a vertex of $\{d:\|d\|_\infty\leq 1\}$, the proof will be complete. Otherwise, without loss of generality, suppose that 
$|\hat d_1|<1$. Let $f\colon  [-1, 1]\to \mathbb{R}$ given by 
$f(t)=\|(A-\diag((t, \check d)))^{-1}\|$, where $\check d$ is obtained by removing the first component of
 $\hat d$. By Sherman-Morrison formula \cite{Horn}, $f(t)=\|\hat A^{-1}-\frac{t}{1+t\hat A^{-1}_{11}}E\|$, where $\hat A=A+\diag((0, \check d))$ and
  $E=\hat A^{-1}_{\ast1}\hat A^{-1}_{1\ast}$. Due to regularity of
 $[A-I, A+I]$,  $\frac{t}{1+t\hat A^{-1}_{11}}$ is well-defined for $t\in[-1, 1]$. Consider the optimization problem 
 $\max_{t\in[-1, 1]} f(t)$. Since $\|A+tE\|$ as a function of $t$ is convex and $g(t)=\frac{t}{1+t\hat A^{-1}_{11}}$  is strictly monotone on $[-1, 1]$, $f$ is convex on its domain \cite{Boyde}, and consequently $\max_{t\in[-1, 1]} f(t)=\max\{{f(-1), f(1)}\}$. Hence, due to optimality of $\hat d$, we get a new point $\tilde d$ which is optimal to \eqref{ONEr} and all components instead of first one are equal to $\hat d$ and its first component is either one or minus one. In the same line, one can obtain a solution $\tilde d$ with $|\tilde d|=e$, which completes the proof. 
\end{proof} 

By Proposition \ref{Pr1}, to handle problem \eqref{ONEr}, one needs to check solely all vertices of  $\{d:\|d\|_\infty\leq 1\}$. As the number of vertices is $2^n$, this method may not be effective for large $n$. 
Indeed, problem \eqref{ONEr} is NP-hard in general. It is known that for any rational $p\in [1, \infty)$, except for $p=1, 2$, computation of the matrix $p$-norm of a given matrix is NP-hard \cite{Hend}.  Consequently, problem \eqref{ONEr} is NP-hard for any rational $p\in [1, \infty)$ except $p=1, 2$. We prove intractability for 1-norm, so it is NP-hard for $\infty$-norm, too.  We conjecture it is also NP-hard for 2-norm. 

\begin{lemma}\label{lmmNormBoundPropNP}.
For any $u,v\in\R^n$ and any vector norm we have either $\|u\|\leq\|u+v\|$ or $\|u\|\leq\|u-v\|$.
\end{lemma}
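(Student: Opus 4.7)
The plan is a short proof by contradiction using the triangle inequality. I would assume, toward contradiction, that both inequalities fail, i.e., that $\|u+v\| < \|u\|$ and $\|u-v\| < \|u\|$ hold simultaneously.

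The key observation is the identity $2u = (u+v)+(u-v)$. Applying the triangle inequality to this decomposition yields
\begin{equation*}
2\|u\| \;=\; \|(u+v)+(u-v)\| \;\leq\; \|u+v\| + \|u-v\|.
\end{equation*}
Under the contradictory assumption, the right-hand side is strictly less than $\|u\| + \|u\| = 2\|u\|$, producing $2\|u\| < 2\|u\|$, which is impossible. Hence at least one of $\|u\|\leq\|u+v\|$ or $\|u\|\leq\|u-v\|$ must hold.

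There is essentially no obstacle here; the only subtlety is to notice the symmetric decomposition $2u=(u+v)+(u-v)$, which lets the triangle inequality do all the work. The statement and argument are valid for any norm (actually for any seminorm) and do not use anything specific to $\mathbb{R}^n$ beyond being a normed vector space, so no further case analysis is required.
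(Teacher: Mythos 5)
Your proof is correct and uses exactly the same key idea as the paper: the decomposition $2u=(u+v)+(u-v)$ combined with the triangle inequality. The paper phrases it directly (if the average of $\|u+v\|$ and $\|u-v\|$ is at least $\|u\|$, then the larger of the two is as well) rather than by contradiction, but this is only a cosmetic difference.
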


\begin{proof}
By triangle inequality $\|u\|=\frac{1}{2}\|u+v+u-v\|\leq\frac{1}{2}(\|u+v\|+\|u-v\|)$, from which the statement follows.
\end{proof} 

\begin{proposition}
Computation of $c_1(A)$ is an NP-hard problem.
\end{proposition}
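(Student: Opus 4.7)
The plan is a polynomial-time reduction from a standard NP-hard problem about matrices with sign variables. A convenient choice is the computation of $\gamma(M):=\max_{s\in\{-1,1\}^n}\|Ms\|_1$ for a rational matrix $M$; this is the $\ell_\infty$-to-$\ell_1$ operator norm, known to be NP-hard (reducible from the cut-norm problem, itself reducible from MAX-CUT). The goal is to construct, given $M$, a matrix $A=A(M)$ in polynomial time such that $c_1(A)$ and $\gamma(M)$ are related by a computable affine formula.

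I would use a block construction of the form
\begin{equation*}
A \;=\; \begin{pmatrix} \alpha I_m & M \\ 0 & \beta I_n \end{pmatrix}
\end{equation*}
together with a small additional coupling (e.g.\ a rank-one corner term) designed to force a distinguished column of $(A-\diag(d))^{-1}$ to take the form $Md^{(2)}+c$, where $d^{(2)}$ denotes the last $n$ coordinates of $d$ and $c$ is a known vector. For $\alpha,\beta>1$ sufficiently large the interval matrix $[A-I,A+I]$ is regular, so Theorem \ref{TB} applies and $c_1(A)$ is finite; Proposition \ref{Pr1} then reduces its computation to a maximum over vertices $d\in\{-1,1\}^{m+n}$. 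Sherman--Morrison identities expose the dependence of each column of $(A-\diag(d))^{-1}$ on the coordinates of $d$ in an explicit rank-one form.

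Lemma \ref{lmmNormBoundPropNP} enters to decouple the auxiliary coordinates $d^{(1)}$ from the ``essential'' coordinates $d^{(2)}$ in the vertex maximization. Applied successively to each $d^{(1)}_i$, with $u$ the column under consideration and $v$ its Sherman--Morrison rank-one correction in that coordinate, the lemma guarantees that flipping $d^{(1)}_i$ to a suitable sign cannot decrease the column $1$-norm. Iterating yields a vertex of $\{-1,1\}^m$ at which the distinguished column $1$-norm equals $\|Md^{(2)}+c\|_1$ up to a constant factor, so $c_1(A)$ reduces to an affine function of $\max_{d^{(2)}\in\{-1,1\}^n}\|Md^{(2)}+c\|_1$, which in turn determines $\gamma(M)$. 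Hence any polynomial-time algorithm for $c_1(A)$ would yield one for $\gamma(M)$.

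The main obstacle is the design of the coupling term that binds $d^{(1)}$ and $d^{(2)}$ inside a \emph{single} dominant column: with a pure block-triangular $A$ the columns of $(A-\diag(d))^{-1}$ can be optimized independently, making the problem tractable, so the coupling must be delicate enough to destroy that decoupling while still admitting a clean Sherman--Morrison analysis. A secondary issue is preserving the regularity of $[A-I,A+I]$, which is handled by taking $\alpha,\beta$ sufficiently large at the cost of explicit scaling factors in the final identity. Once the coupling is in place, the combination of Proposition \ref{Pr1} (vertex reduction via Sherman--Morrison) and Lemma \ref{lmmNormBoundPropNP} (iterative coordinate-flip bounds on column $1$-norms) supplies the analytical framework to complete the reduction.
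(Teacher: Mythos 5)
Your high-level plan matches the paper's: reduce from an NP-hard sign-maximization problem (your $\gamma(M)=\max_{s\in\{\pm1\}^n}\|Ms\|_1$ is essentially the same source the paper uses, since Rohn's problem $\max\{e^T|x| : |Ax|\le e\}$ equals $\max_{s\in\{\pm1\}^n}\|A^{-1}s\|_1$), build a larger structured matrix with a large scaling parameter $\alpha$, invoke Proposition~\ref{Pr1} to restrict to vertices, and use Lemma~\ref{lmmNormBoundPropNP} to argue that a distinguished column dominates the $1$-norm of the inverse. But there is a genuine gap exactly where you flag it: the coupling term is never constructed, and without it there is no reduction. As you yourself observe, for the pure block-triangular $A$ the columns of $(A-\diag(d))^{-1}$ decouple and the vertex maximization is tractable. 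The entire difficulty of the statement is that in $c_1(A)$ the sign variables live only on the \emph{diagonal} of $A-\diag(d)$, whereas you need them to appear as the argument $s$ of $\|Ms\|_1$ inside a single column of the inverse; ``a small rank-one corner term'' is a placeholder, not a construction, and it is not evident that any such perturbation achieves this while keeping $[A-I,A+I]$ regular and keeping the distinguished column dominant.

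The paper's resolution of precisely this obstacle is worth internalizing: it rewrites the feasible points of Rohn's problem as $Ax=\diag(b)y$ with $y=e$ forced by auxiliary equations $y=ez$, $z=1$, so that the sign variables $\diag(b)$ become diagonal entries of a bordered $(2n+1)\times(2n+1)$ system $M(D,D',d)$, while the fixed vector $e$ is routed into the inverse through the extra column $-e\alpha$ and the corner entry $2+d$. The last column of $M(D,D',d)^{-1}$ is then (up to the factor $2/\alpha$) the solution $(y,x,z)/\alpha$ itself, and its $1$-norm tends to $1+n+e^T|A^{-1}De|$ as $\alpha\to\infty$; Lemma~\ref{lmmNormBoundPropNP} is invoked only to show that the maximizing column can be taken to be this last one. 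Your outline needs an analogous explicit mechanism (auxiliary variables plus a bordering column, not merely a corner perturbation) before any affine relation between $c_1(A)$ and $\gamma(M)$ can be asserted. You would also need to verify regularity of $[A-I,A+I]$ for the coupled matrix (the paper gets this from the MC-matrix spectral bound $\lambda_{\min}\ge\frac{1}{2n-1}$ plus scaling) and to control the $O(1/\alpha)$ error terms so that $\alpha$ has polynomial size and the reduction recovers $\gamma(M)$ to sufficient accuracy. As written, the proposal is a correct strategy statement with the central construction missing.
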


\begin{proof}
By \cite{Roh2000}, solving the problem
\begin{align}\label{maxNpRohn}
\max\ e^T|x|  \st |Ax|\leq e
\end{align}
is NP-hard. Even more, it is intractable even with accuracy less than $\frac{1}{2}$ when $A^{-1}$ is a so called MC-matrix~\cite{Roh2000}.
Recall that $M\in\R^{n\times n}$ is an MC matrix if it is symmetric, $M_{ii}=n$ and $M_{ij}\in\{0,-1\}$, $i\not=j$. For an MC matrix $M$ we have $\lambda_{\max}(M)\leq 2n-1$, from which $\lambda_{\min}(M^{-1})\geq \frac{1}{2n-1}$. Therefore $\lambda_{\min}(A)\geq \frac{1}{2n-1}$ and we can achieve $\lambda_{\min}(A)>1$ by a suitable scaling. As a consequence, $[A-I,A+I]$ is regular. 

Feasible solutions to the above optimization problem can be equivalently characterized as
\begin{align*}
Ax=b,\ \ b\in[-e,e],
\end{align*}
or, substituting $b=\diag(b)e=\diag(b)y$ with $y=e$, 
\begin{align*}
\begin{pmatrix}
 A & -\diag(b) \\ 0 & I
\end{pmatrix}
\begin{pmatrix}
 x \\ y
\end{pmatrix}
=\begin{pmatrix}
 0 \\ e
\end{pmatrix},\ \ b\in[-e,e].
\end{align*}
Introducing an auxiliary variable $z=1$, we get
\begin{align*}
\begin{pmatrix}
 A & -\diag(b) & 0 \\ 0 & I & -e \\ 0 & 0 & 1
\end{pmatrix}
\begin{pmatrix}
 x \\ y \\ z
\end{pmatrix}
=\begin{pmatrix}
 0 \\ 0 \\ 1
\end{pmatrix},\ \ b\in[-e,e].
\end{align*}
Rewrite the system as
\begin{align*}
\begin{pmatrix}
 D & A & 0 \\ I & 0 & -e \\ 0 & 0 & 1
\end{pmatrix}
\begin{pmatrix}
 y \\ x \\ z
\end{pmatrix}
=\begin{pmatrix}
 0 \\ 0 \\ 1
\end{pmatrix},\ \ |D|\leq I.
\end{align*}
Let $\alpha>0$ be sufficiently large. The system equivalently reads
\begin{align*}
\begin{pmatrix}
 D & A & 0 \\ \alpha I & 0 & -e\alpha \\ 0 & 0 & 2
\end{pmatrix}
\begin{pmatrix}
 \frac{1}{\alpha}y \\[2pt] \frac{1}{\alpha}x \\[2pt] \frac{1}{\alpha}z
\end{pmatrix}
=\begin{pmatrix}
 0 \\ 0 \\ \frac{2}{\alpha}
\end{pmatrix},\ \ |D|\leq I.
\end{align*}
Now, we relax the system by introducing intervals on the remaining diagonal entries
\begin{align*}
\begin{pmatrix}
 D & A & 0 \\ \alpha I & D' & -e\alpha \\ 0 & 0 & 2+d
\end{pmatrix}
\begin{pmatrix}
 \frac{1}{\alpha}y \\[2pt] \frac{1}{\alpha}x \\[2pt] \frac{1}{\alpha}z
\end{pmatrix}
=\begin{pmatrix}
 0 \\ 0 \\ \frac{2}{\alpha}
\end{pmatrix},\ \ |D|,|D'|\leq I,\ |d|\leq 1.
\end{align*}
Denote by $M(D,D',d)$ the constraint matrix. The solution is $\frac{2}{\alpha}$-multiple of the last column of the inverse matrix $M(D,D',d)^{-1}$. That is why we analytically express the inverse matrix (notice that it exists due to regularity of $[\alpha A-I,\alpha A+I]$)
\begin{align*}\addtolength\arraycolsep{-0.35ex}
M(D,D',d)^{-1}=
\begin{pmatrix}
 -D'C & \frac{1}{\alpha}(I+D'CD) & \frac{1}{2+d}(e+D'CDe) \\[2pt]
 \alpha C & -CD & -\frac{\alpha}{2+d}CDe \\[2pt]
 0 & 0 & \frac{1}{2+d}
\end{pmatrix},\, |D|, |D'|\leq I, |d|\leq 1,
\end{align*}
where $C:=(\alpha A-DD')^{-1}$. 
The idea of the proof is to reduce the above mentioned NP-hard problem to computation of the condition number for matrix $M(0,0,0)$. 
Obviously, 1-norm of $M(D,D',d)^{-1}$ is attained for the value of $d=-1$, so we can fix it for the remainder of the proof. 

\emph{Claim A.} 
There exist $\bar D$ and $\bar D'$ such that $|\bar D|=|\bar D'|=I$ and $c_1(M(0,0,0))=\|M(\bar D, \bar D', -1)^{-1}_{*(2n+1)}\|_1$.

\emph{Proof of the Claim A.}
By Proposition~\ref{Pr1}, the maximum norm is attained for $|D|=|D'|=I$. Therefore, we need only to investigate the matrices with $|D|=|D'|=I$. 
Let $c_1(M(0,0,0))=\|M( D,  D', -1)^{-1}\|_1$ with $|D|=|D'|=I$. If 1-norm of $M( D,  D', -1)^{-1}$ is attained for the last column, the claim is resulted. Otherwise,
since $\alpha>0$ is arbitrarily large, the 1-norm is attained for no column of the middle part.  Suppose that the norm is attained for $i$th column of the first column block. We compare the norms of this column and the last column of $M(D,D',d)^{-1}$, that is, we compare vectors
\begin{align*}
\begin{pmatrix}
-D'C_{*i} \\
\alpha C_{*i} \\
0
\end{pmatrix}\quad\mbox{and}\quad
\begin{pmatrix}
e+D'CDe \\
-\alpha CDe \\
1
\end{pmatrix}.
\end{align*}
We compare separately their three blocks. Obviously, for the last entry the latter is larger. Since 
$C\to 0$ as $\alpha\to\infty$, the first block of entries of the former vector is arbitrarily small and neglectable. Thus we focus on the second block. The former vector has entries $\alpha C_{*i}$. In view of Lemma~\ref{lmmNormBoundPropNP}, one can choose a suitable $\bar D$ such that $|\bar D|=I$ and
 $\|\alpha C_{*i} \|_1\leq \|\alpha C\bar De \|_1=\|\alpha C_{*i}+\alpha\sum_{j\neq i} C_{*j}\bar d_{jj} \|_1$. Furthermore, one can select a matrix $\bar D'$ with $|\bar D'|=I$ and 
  $\|e+D'CDe\|_1=\|e+\bar D'C\bar De\|_1$. Because $c_1(M(0,0,0))=\|M( D,  D', -1)^{-1}\|_1$, the given matrices $\bar D$ and $\bar D'$ fulfill the claim. 

\emph{Claim B.} 
The 1-norm of the last column is arbitrarily close to $1+n+ e^T|A^{-1}De|$. 

\emph{Proof of the Claim B.}
The last entry of the column is~1. Since $C\to 0$ as $\alpha\to\infty$, the first block tends to $e$ as $\alpha\to\infty$.  The second block reads $-\alpha CDe=-(A-\frac{1}{\alpha}DD')^{-1}De$, which tends to $-A^{-1}De$ as $\alpha\to\infty$. So its 1-norm tends to $e^T|A^{-1}De|$.

By Claim B, the 1-norm of the last column is by $1+n$ larger than the objective value of \eqref{maxNpRohn}. So by maximizing 1-norm of $M(D,D',d)^{-1}$ we can deduce the maximum of \eqref{maxNpRohn} with arbitrary precision.
Notice that $e^T|A^{-1}|e$ is an upper bound on \eqref{maxNpRohn} and it has polynomial size, so we can find $\alpha$ of polynomial size, too by the standard means (c.f.~\cite{Schr1998}).
\end{proof}

In general, the computation of $c(A)$ is not easy. However, computation of the condition number with respect to some norms or for some classes of matrices  is not difficult. In the rest of the section, we study the given condition number from this aspect.  

For the following we say that a matrix norm is monotone if $|A|\leq B$ implies $\|A\|\leq\|B\|$. For instance, the scaled matrix p-norms are monotone.

\begin{proposition}\label{propGenUpperBound}
Let $\VERT \cdot \VERT$ be a monotone matrix norm. If $\VERT|A^{-1}|\VERT<1$, then
\begin{align*}
c(A)_{\VERT \cdot \VERT}\leq \frac{\VERT A^{-1}\VERT }{1-\VERT  |A^{-1}| \VERT }.
\end{align*}
\end{proposition}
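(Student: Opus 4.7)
The plan is to reduce the bound on $c(A)$ to a Neumann-series style estimate on $(A-\diag(d))^{-1}$, for an arbitrary $d$ with $\|d\|_\infty\leq 1$. Write $D:=\diag(d)$, so that $|D|\leq I$ entrywise, and factor $A-D = A(I - A^{-1}D)$. The goal is to show
\begin{align*}
\VERT (A-D)^{-1}\VERT \leq \frac{\VERT A^{-1}\VERT}{1-\VERT |A^{-1}|\VERT}
\end{align*}
uniformly in $d$; taking the maximum then yields the claim.

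First I would verify that the Neumann series for $(I-A^{-1}D)^{-1}$ converges in the given norm. Since $D$ is diagonal, $|A^{-1}D| = |A^{-1}|\,|D| \leq |A^{-1}|$ entrywise, and monotonicity of $\VERT\cdot\VERT$ gives $\VERT A^{-1}D\VERT \leq \VERT |A^{-1}D|\VERT \leq \VERT |A^{-1}|\VERT <1$. In particular, $I-A^{-1}D$ is invertible, which in passing confirms that $A-D$ is invertible for all $|D|\leq I$ (consistent with the regularity of $[A-I,A+I]$).

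Next, using the resolvent-type identity $(A-D)^{-1} = A^{-1}+A^{-1}D(A-D)^{-1}$ and taking norms,
\begin{align*}
\VERT (A-D)^{-1}\VERT
 \leq \VERT A^{-1}\VERT + \VERT A^{-1}D\VERT\cdot\VERT(A-D)^{-1}\VERT
 \leq \VERT A^{-1}\VERT + \VERT |A^{-1}|\VERT\cdot\VERT(A-D)^{-1}\VERT.
\end{align*}
Rearranging using $1-\VERT|A^{-1}|\VERT>0$ gives the desired bound on $\VERT (A-D)^{-1}\VERT$, after which maximizing over $\|d\|_\infty\leq 1$ finishes the proof.

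The only place requiring care is the passage $\VERT A^{-1}D\VERT \leq \VERT |A^{-1}|\VERT$; this is precisely where monotonicity of the norm is used, together with the fact that multiplication by a diagonal $D$ with $|D|\leq I$ cannot increase absolute values entrywise. Everything else is routine algebra on inverses, so this identity is the crux.
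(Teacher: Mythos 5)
Your proof is correct and follows essentially the same route as the paper's: both rest on the factorization $A-\diag(d)=A(I-A^{-1}\diag(d))$ and on using monotonicity to replace $\VERT A^{-1}\diag(d)\VERT$ by $\VERT |A^{-1}|\VERT<1$ in a Neumann-series-type estimate. The only (harmless) differences are that you treat an arbitrary $d$ with $\|d\|_\infty\leq 1$ and close the bound by rearranging the resolvent identity, whereas the paper first invokes Proposition~\ref{Pr1} to reduce to vertices $|d|=e$ and then sums the geometric series explicitly; your variant even makes the appeal to Proposition~\ref{Pr1} unnecessary.
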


\begin{proof}
By Proposition \ref{Pr1}, we need to check the vertices of $\{d: \|d\|_\infty\leq 1\}$. Let $d$ be such that $|d|=e$ and denote $D:=\diag(d)$. Then
\begin{align*}
(A-D)^{-1}
=(I-A^{-1}D^{-1})^{-1}A^{-1}.
\end{align*}
Since $\rho(A^{-1}D^{-1})\leq\rho(|A^{-1}|)\leq \VERT|A^{-1}|\VERT<1$, we have by Neumann series~\cite{Horn}
\begin{align*}
(A-D)^{-1}
=\sum_{k=0}^{\infty}\left(A^{-1}D^{-1}\right)^k A^{-1}.
\end{align*}
By monotonicity of the matrix norm
\begin{align*}
\VERT (A-D)^{-1}\VERT 
&\leq\sum_{k=0}^{\infty}\big\VERT A^{-1}D^{-1}\big\VERT ^k \cdot \VERT A^{-1}\VERT \\
&\leq\sum_{k=0}^{\infty}\big\VERT  |A^{-1}| \big\VERT ^k \cdot \VERT A^{-1}\VERT 
 = \frac{\VERT A^{-1}\VERT }{1-\VERT  |A^{-1}| \VERT }.
\end{align*}
\end{proof}


\begin{theorem}\label{JhJ}
If $\rho(|A^{-1}|)<\gamma<1$, then there exists a scaling $1$-norm $\VERT\cdot\VERT$ such that 
\begin{align}
\VERT x-x^\star \VERT \leq \frac{\gamma}{1-\gamma}\VERT Ax-b-|x|\VERT,
 \ \  \forall x\in \mathbb{R}^n.
\end{align}
\end{theorem}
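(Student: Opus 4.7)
The plan is to combine Theorem~\ref{TB} with Proposition~\ref{propGenUpperBound}, after carefully engineering a scaling $1$-norm in which the induced matrix norm of $|A^{-1}|$ is squeezed below $\gamma$.

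First I would verify that the regularity hypothesis of Theorem~\ref{TB} is automatic. Indeed, for any diagonal $D$ with $|D|\le I$ we have $\rho(A^{-1}D)\le \rho(|A^{-1}|\,|D|)\le \rho(|A^{-1}|)<1$, so $A-D=A(I-A^{-1}D)$ is invertible, which means $[A-I,A+I]$ is regular.

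The heart of the proof is constructing the scaling. Set $M:=|A^{-1}|$, so $\rho(M)<\gamma$. I would apply Perron--Frobenius to $M^{T}$: since $M^{T}$ may be reducible, consider the perturbed matrix $M^{T}+\epsilon E$, where $E$ is the all-ones matrix and $\epsilon>0$ is small enough that $\rho(M+\epsilon E)<\gamma$ (possible by continuity of eigenvalues). This perturbed matrix is positive, hence irreducible, so it has a strictly positive Perron eigenvector $v>0$ with $(M^{T}+\epsilon E)v=\rho(M+\epsilon E)\,v\le\gamma v$. Dropping the nonnegative quantity $\epsilon Ev$ yields $M^{T}v\le \gamma v$, i.e., $\sum_i |A^{-1}|_{ij}\, v_i \le \gamma v_j$ for all $j$. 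Put $D:=\diag(v)$ and define the scaling $1$-norm $\VERT x\VERT:=\|Dx\|_1$. The induced matrix norm is $\VERT B\VERT=\|DBD^{-1}\|_1$, which is monotone (the ordinary $1$-norm is monotone and $|B|\le C$ is preserved under the similarity by $D$). The column-sum formula for $\|\cdot\|_1$ then gives
\begin{equation*}
\VERT |A^{-1}|\VERT
=\|D|A^{-1}|D^{-1}\|_1
=\max_{j}\frac{1}{v_j}\sum_i v_i |A^{-1}|_{ij}
\le \gamma.
\end{equation*}

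With this scaling $1$-norm in hand, Proposition~\ref{propGenUpperBound} applies (since $\VERT|A^{-1}|\VERT\le\gamma<1$) and yields
\begin{equation*}
c(A)_{\VERT\cdot\VERT}\le \frac{\VERT A^{-1}\VERT}{1-\VERT |A^{-1}|\VERT}.
\end{equation*}
Using monotonicity once more, $\VERT A^{-1}\VERT\le \VERT |A^{-1}|\VERT\le \gamma$, and since $t\mapsto t/(1-t)$ is increasing on $[0,1)$, we obtain $c(A)_{\VERT\cdot\VERT}\le \gamma/(1-\gamma)$. Inserting this into the error bound \eqref{Er} of Theorem~\ref{TB} gives the claimed inequality.

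The only delicate step is the Perron--Frobenius argument for a possibly reducible nonnegative matrix; everything else is bookkeeping with monotonicity of scaled $p$-norms and the previously established condition-number bound. I would stress in the write-up that the scaling depends only on $A$ (through a Perron-type eigenvector of a perturbation of $|A^{-1}|$), not on $x$ or $b$.
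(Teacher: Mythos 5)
Your proof is correct, and the central construction is the same one the paper uses: obtain a strictly positive vector $v$ from the Perron--Frobenius theorem applied to a perturbation of $|A^{-1}|$ whose spectral radius is still below $\gamma$, and use $v$ to define a weighted $1$-norm in which $|A^{-1}|$ has induced norm at most $\gamma$ (the paper takes $B>|A^{-1}|$ with $\rho(B)=\gamma$ and sets the norm to $x\mapsto v^{T}|x|$ for the Perron vector of $B$). Where you genuinely diverge is in how the final inequality is assembled. The paper argues directly on the residual: it derives the componentwise estimate $(I-|A^{-1}|)\,|x-x^{\star}|\le |A^{-1}|\,\bigl|Ax-b-|x|\bigr|$, passes to $(I-B)^{-1}B$ via the Neumann series, and only then takes norms. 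You instead reuse the general machinery already established, combining Theorem~\ref{TB} with Proposition~\ref{propGenUpperBound} and closing with the monotonicity estimates showing that the induced norms of both $A^{-1}$ and $|A^{-1}|$ are at most $\gamma$, so that the condition number is at most $\gamma/(1-\gamma)$. Your route is more modular and shorter given what precedes it, at the cost of needing the (easily verified) monotonicity of scaled $1$-norms; the paper's direct route yields as a by-product the componentwise bound $|x-x^{\star}|\le (I-B)^{-1}B\,\bigl|Ax-b-|x|\bigr|$, which carries a bit more information than the norm inequality alone. A small point in your favor: by taking the Perron vector of the transposed perturbed matrix you control exactly the weighted column sums that the induced norm of a weighted $1$-norm requires, a step the paper's proof (which works with a right eigenvector) leaves implicit.
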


\begin{proof}
Note that the assumption implies that \eqref{AVE} has a unique solution, see Theorem 4 in \cite{Rohn_f}, and $[A-I, A+I]$ is regular. Due to the continuity of eigenvalues with respect to the matrix elements, there exists matrix $B$ with $|A^{-1}|<B$ and $\rho(B)= \gamma$. By Perron-Frobenius theorem, there exists $v> 0$ such that $Bv=\rho(B)v$. We define norm $\VERT\cdot\VERT$ as $\VERT x\VERT=v^T|x|$. Note that $\VERT B\VERT=\rho(B)$. 
As $Ax^\star-b-|x^\star|=0$, we have
\begin{align*}
&Ax-b-|x|=A(x-x^\star)-(|x|-|x^\star|)\\
& \Rightarrow x-x^\star=A^{-1}(Ax-b-|x|)+A^{-1}(|x|-|x^\star|)\\
& \Rightarrow |x-x^\star|\leq |A^{-1}||Ax-b-|x||+|A^{-1}|(||x|-|x^\star||)\\
& \Rightarrow (I-|A^{-1}|)|x-x^\star|\leq |A^{-1}||Ax-b-|x||
\end{align*}
By Neumann series theorem \cite{Horn}, $(I-|A^{-1}|)^{-1}$ and $(I-B)^{-1}$ exist and are non-negative. Hence,
\begin{align*}
& |x-x^\star|\leq (I-|A^{-1}|)^{-1} |A^{-1}| |Ax-b-|x||\\
& \Rightarrow  |x-x^\star|\leq (I-B)^{-1}B|Ax-b-|x||
\end{align*}
The last inequality follows from  $(I-|A^{-1}|)^{-1}=\sum_{i=0}^\infty |A^{-1}|^i\leq \sum_{i=0}^\infty B^i=(I-B)^{-1}$. Hence,
\begin{align*}
\VERT x-x^\star\VERT 
 &\leq \VERT(I-B)^{-1}\VERT\cdot \VERT B\VERT\cdot \VERT Ax-b-|x|\VERT\\
 &\leq \bigg(\sum_{i=0}^\infty\VERT B^i\VERT\bigg)\VERT B\VERT\cdot\VERT Ax-b-|x|\VERT\\
 &\leq \frac{\gamma}{1-\gamma}\VERT Ax-b-|x|\VERT.
\end{align*}
Moreover, for $d$ with $\|d\|_\infty \leq 1$, 
\begin{align*}
|(B^{-1}-\diag(d))^{-1}|
 = |(I-B\diag(d))^{-1}B|
 = \bigg| \sum_{i=0}^\infty (B\diag(d))^i B\bigg|
 \leq  \sum_{i=1}^\infty B^i.
\end{align*}
Since $\sum_{i=0}^\infty B^i=-(B^{-1}-I)^{-1}$, the Perron--Frobenius theorem implies $c_{\VERT\cdot\VERT}(B^{-1})=\frac{\gamma}{1-\gamma}$.
\end{proof} 

One may wonder why we do not use the well-known result which states the existence of  a matrix norm $\VERT. \VERT$ with $\VERT A \VERT<\rho$, see Lemma 5.6.10 in \cite{Horn}, to prove the above theorem. The underlying reason is that the given matrix norm by this result is not necessarily a scaled matrix $p$-norm. It is worth mentioning that, under the assumption of  Theorem \ref{JhJ}, when $|A^{-1}|>0$, one obtains
\begin{align}
c_{\VERT\cdot\VERT}(A)= \frac{\rho(|A^{-1}|)}{1-\rho(|A^{-1}|)},
\end{align}
for some scaling $1$-norm.
Note that a sufficient condition for having $\rho(|A^{-1}|)<1$ is the  existence of a diagonal matrix $S$ with $|S|=I$ such that  $A^{-1}S\geq 0$ and   $(A-S)^{-1}S\geq 0$. In fact, Theorem 5.2 in Chapter 7 of \cite{Berman} implies that  $\rho(A^{-1}S)<1$ under this condition, which is equivalent  to  $\rho(|A^{-1}|)<1$.

Error bound can be utilized as a tool in stability analysis \cite{Cottle, facchinei2007finite}. As mentioned earlier, \eqref{AVE}  has a unique solution for each $b\in\R^n$ if and only if $[A-I, A+I]$ is regular. We denote the set of matrices which satisfy this property by $\mathcal{A}$. It is easily seen that $\mathcal{A}$ is an open set. Let function $X(A, b)\colon  \mathcal{A}\times \mathbb{R}^n\to  \mathbb{R}^n$ return the solution of~\eqref{AVE}. In the following proposition, we list some properties of function $X$.

\begin{proposition}
Let $A\in\mathcal{A}$. 
\begin{enumerate}[i)]
\item
For any $b_1, b_2\in\mathbb{R}^n$,
\begin{align*}
\|X(A,b_1)-X(A,b_2)\|\leq c(A)\|b_1-b_2\|
\end{align*}
\item
Function $X$ is locally Lipschitz with modulus $c(A)$.
\end{enumerate}
\end{proposition}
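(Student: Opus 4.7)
The plan is to derive both parts directly from Theorem~\ref{TB}, exploiting the defining identity $A\,X(A,b)-|X(A,b)|=b$ to collapse the residual in each case.

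For part (i), I would set $x^\star:=X(A,b_1)$ and $x:=X(A,b_2)$ and apply the error bound \eqref{Er} for the equation $Ay-b_1=|y|$. The key observation is that the residual telescopes: since $Ax-|x|=b_2$, one gets
\begin{align*}
Ax-b_1-|x|=b_2-b_1,
\end{align*}
so \eqref{Er} immediately yields $\|x-x^\star\|\leq c(A)\|b_2-b_1\|$, which is the claimed inequality.

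For part (ii), I would use the same trick, but now to compare the solutions for distinct matrices. Fix $(A,b)\in\mathcal{A}\times\mathbb{R}^n$ and let $(A',b')$ be close to $(A,b)$. Writing $x':=X(A',b')$ and $x:=X(A,b)$, the residual with respect to $(A,b)$ becomes
\begin{align*}
Ax'-b-|x'|=(A-A')x'+(b'-b),
\end{align*}
because $A'x'-|x'|=b'$. Applying Theorem~\ref{TB} at $A$ gives
\begin{align*}
\|x'-x\|\leq c(A)\bigl(\|A-A'\|\cdot\|x'\|+\|b'-b\|\bigr).
\end{align*}

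The main obstacle is converting this into a genuine local Lipschitz estimate, because the factor $\|x'\|$ on the right-hand side depends on the point we are trying to control. I would handle this by an absorption argument: writing $\|x'\|\leq\|x\|+\|x'-x\|$ and substituting above,
\begin{align*}
\bigl(1-c(A)\,\|A-A'\|\bigr)\|x'-x\|\leq c(A)\bigl(\|A-A'\|\cdot\|x\|+\|b'-b\|\bigr),
\end{align*}
so on the neighborhood $\{(A',b'):\|A-A'\|\leq 1/(2c(A))\}$ we get $\|x'\|\leq 2\|x\|+2c(A)\|b'-b\|$, which is locally bounded. Plugging this bound back into the previous inequality yields a local Lipschitz estimate for $X$ whose modulus tends to $c(A)$ as $(A',b')\to(A,b)$, establishing the claim.
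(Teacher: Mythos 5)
Your proof is correct, but for part (ii) it takes a genuinely different route from the paper. For part (i) the two arguments are essentially the same: the paper factors $|x_1|-|x_2|=D(x_1-x_2)$ with $D\in[-I,I]$ and inverts $A+D$ directly, whereas you invoke Theorem~\ref{TB} after telescoping the residual to $b_2-b_1$; since Theorem~\ref{TB} is itself proved by exactly that mean-value factorization, the content is identical and your version is, if anything, cleaner. For part (ii) the paper cites Clarke's implicit function theorem for the nonsmooth map $\phi(A,b,x)=Ax-b-|x|$, using regularity of $\partial_x\phi\subseteq[A-I,A+I]$ to get local Lipschitzness of $X$ in both arguments at once; you instead give a self-contained absorption argument, using openness of $\mathcal{A}$ to guarantee $X(A',b')$ exists nearby and then bootstrapping the bound $\|x'-x\|\leq c(A)(\|A-A'\|\,\|x'\|+\|b'-b\|)$ into a genuine local estimate. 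Your approach is more elementary and makes the constants explicit, at the cost of not simultaneously re-deriving local existence (which Clarke's theorem provides for free). One small caveat: your final modulus for the $A$-perturbation is really $c(A)\,\|x\|$ rather than $c(A)$, since the coefficient of $\|A-A'\|$ carries the factor $\|X(A,b)\|$ even in the limit $(A',b')\to(A,b)$; so the claim that the modulus tends to $c(A)$ only holds up to that factor. The paper's own proof asserts the bound $c(A)(\|A_1-A_2\|+\|b_1-b_2\|)$ with the same looseness, so this is an imprecision in the statement rather than a defect specific to your argument.
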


\begin{proof}
First, we show the first part. Suppose that $X(A,b_1)=x_1$ and $X(A,b_2)=x_2$. Thus,
\begin{align*}
Ax_1-|x_1|-(Ax_2-|x_2|)=b_1-b_2.
\end{align*}
There exists a matrix $D\in [-I, I]$ such that $|x_2|-|x_1|= D(x_2-x_1)$. So the above equality can be written as 
\begin{align*}
(A+D)(x_1-x_2)=b_1-b_2,
\end{align*}
which implies that $\|x_1-x_2\|\leq \|(A+D)^{-1}\|\cdot\|b_1-b_2\|\leq c(A)\|b_1-b_2\|$.

Now, we prove the second part. Consider the locally Lipschitz function $\phi\colon  \mathcal{A}\times \mathbb{R}^n\times \mathbb{R}^n\to \mathbb{R}^n$ given by 
$\phi(A, b, x)=Ax-b-|x|$. We have $\partial_x\phi(A, b, x)\subseteq [A-I, A+I]$. As $[A-I, A+I]$ is regular, the implicit function theorem (see Chapter~7 in~\cite{Clarke}) implies that there exists  a locally Lipschitz function $X(A, b)\colon  \mathcal{A}\times \mathbb{R}^n\to  \mathbb{R}^n$  with $\phi(A, b, X(A, b))=0$. In addition,   
\begin{align*}
\|X(A_1,b_1)-X(A_2,b_2)\|\leq c(A)(\|A_1-A_2\|+\|b_1-b_2\|)
\end{align*}
where $A_1, A_2$ and $b_1, b_2$ are in some neighborhoods of $A$ and $b$, respectively.
\end{proof} 

As mentioned earlier, one class of effective approaches to handle \eqref{AVE} is concave optimization methods. Mangasarian \cite{MangasarianC1} proposed  the following concave optimization problem,
\begin{align}\label{quadr}
\min \ & e^T(Ax-b-| x|)\ \
 s.t. \ \ (A+I)x\geq b,\ 
  (A-I)x\geq b.
\end{align}
He showed that \eqref{AVE} has a solution if and only if the optimal value of \eqref{quadr}  is zero. Now, we show that \eqref{quadr} has weak sharp minima property. 
Consider an optimization problem $\min_{x\in X} f(x)$ with the optimal solution set~$S$. The set $S$ is called a weak sharp minima if there is an $\alpha>0$ such that
\begin{align*}
\alpha\cdot\dist_S(x)\leq f(x)-f(s), \quad \forall x\in X,\ \forall s\in S,
\end{align*}
where $\dist_S(x):=\min\{\|x -s\|_2 : s\in S\}$.
 Weak sharp minima notion has wide applications in the convergence analysis of iterative methods and error bounds \cite{Burke2, Burke}. 
\begin{proposition}
Let $A\in\mathcal{A}$. Then the optimal solution of \eqref{quadr} is a weak sharp minimum.
\end{proposition}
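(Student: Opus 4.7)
The plan is to combine the error bound of Theorem~\ref{TB} with the special non-negativity structure of the residual on the feasible set of \eqref{quadr}. The first observation is that the two linear constraints $(A+I)x\geq b$ and $(A-I)x\geq b$ are jointly equivalent to the componentwise inequality $Ax-b\geq |x|$, so the residual $r(x):=Ax-b-|x|$ satisfies $r(x)\geq 0$ on the feasible set $X$. In particular, the objective can be rewritten as $f(x)=e^T r(x)=\|r(x)\|_1$ for every $x\in X$. Since $A\in\mathcal{A}$, the equation \eqref{AVE} has a unique solution $x^\star$, which is the unique feasible point where $f$ vanishes; thus $S=\{x^\star\}$ and $\operatorname{dist}_S(x)=\|x-x^\star\|_2$.

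Next I would apply Theorem~\ref{TB} with the (unscaled) Euclidean norm to obtain
\begin{align*}
\|x-x^\star\|_2 \leq c_2(A)\,\|r(x)\|_2, \quad \forall x\in\mathbb{R}^n,
\end{align*}
where $c_2(A)$ is finite because $[A-I,A+I]$ is regular and the maximum in \eqref{ONEr} is taken over a compact set. The bridge to the linear objective is then the elementary inequality $\|y\|_2\leq\|y\|_1$, valid for all $y\in\mathbb{R}^n$. Applied to $r(x)$ on the feasible set, and using $r(x)\geq 0$, this gives $\|r(x)\|_2\leq\|r(x)\|_1=e^T r(x)=f(x)-f(x^\star)$. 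Chaining these two inequalities yields
\begin{align*}
\tfrac{1}{c_2(A)}\|x-x^\star\|_2 \leq f(x)-f(x^\star), \quad \forall x\in X,
\end{align*}
which is exactly the weak sharp minimum property with modulus $\alpha=1/c_2(A)>0$.

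I do not anticipate a substantive obstacle: the argument is a clean repackaging of Theorem~\ref{TB}, relying on two pieces of leverage—the feasibility description $r(x)\geq 0$, which converts the linear objective into $\|r(x)\|_1$, and the norm inequality $\|\cdot\|_2\leq\|\cdot\|_1$, which transfers the Euclidean error bound into a bound on that $1$-norm. The only conceptual care is checking that the feasible set is indeed $\{x:r(x)\geq 0\}$ and that $c_2(A)$ is finite under the standing hypothesis, both of which follow immediately from the set-up.
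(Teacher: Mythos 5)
Your proof is correct and follows essentially the same route as the paper: apply Theorem~\ref{TB} in the Euclidean norm, then use $\|r(x)\|_2\leq\|r(x)\|_1=e^Tr(x)$ together with $r(x)\geq0$ on the feasible set to pass to the linear objective. The only difference is that you spell out the (true) observation that the two constraints are jointly equivalent to $Ax-b\geq|x|$, which the paper leaves implicit.
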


\begin{proof}
Let $X$ and $x^\star$ denote the feasible set and the unique solution of \eqref{quadr}, respectively. By Theorem \ref{TB}, $c_2(A)\in \mathbb{R}_+$ and 
\begin{align*}
\frac{1}{c_2(A)}\|x- x^\star\|_2 \leq \|Ax-b-|x| \|_2, \quad  \forall x\in X.
\end{align*}
As  $\|Ax-b-|x| \|_2\leq \|Ax-b-|x| \|_1$ and $Ax-b-|x| \geq 0$ for $x\in X$, we have
\begin{align*}
\frac{1}{c_2(A)}\|x- x^\star\|_2 \leq e^T(Ax-b-|x|), \quad  \forall x\in X,
\end{align*}
which shows that $x^\star$ is a weak sharp minimum.
\end{proof} 

\subsection{Condition number of AVE for 2-norm}

Since $\|A^{-1}\|_2=\frac{1}{\sigma_{\min}(A)}$, $c_2(A)$ can be computed as the optimal value of  the following optimization problem,
 \begin{align}\label{OSEr}
\min \ & \sigma_{\min}(A-\diag(d)) \ \ s.t. \ \ \|d\|_{\infty}\leq 1.
\end{align}
In general, the function $\sigma_{\min}(\cdot)$ is neither convex nor concave; see Remark 5.2 in \cite{Qi}. Here, 
$\sigma_{\min}(\cdot)$ is a function of diagonals. Nonetheless, $\sigma_{\min}(\cdot)$ is also neither convex nor concave in this case; the following example clarifies this point.
From this perspective, Proposition~\ref{Pr1} mentioned above is by far not obvious.

\begin{example}
Let $
A=\begin{pmatrix}
2 & 1\\
-2 & 1
\end{pmatrix}$.
We have
\begin{align*}
\sigma_2(A)=\sqrt{2}\leq \frac{1}{2}\sigma_{\min}(A+I)+\frac{1}{2}\sigma_{\min}(A-I)\approx 1.541,\\
\sigma_2(A)=\sqrt{2}\geq \frac{1}{2}\sigma_{\min}(A+E)+\frac{1}{2}\sigma_{\min}(A-E)\approx 1.34,
\end{align*}
where $E=\begin{pmatrix}
0 & 0\\
0 & 1
\end{pmatrix}$.
\end{example}

In the next proposition, we give a formula for symmetric matrices. Before we get to the proposition, we present a lemma. 
\begin{lemma}\label{kkkk}
Let $A$ be symmetric. The interval matrix $[A-I, A+I]$ is regular If and only if 
\begin{align}\label{fak}
  |\lambda_i(A)|>1,  \  \  i=1, ..., n
\end{align}
\end{lemma}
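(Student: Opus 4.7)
The plan is to exploit that symmetry of $A$ makes $A+\diag(d)$ symmetric for every $d\in[-1,1]^n$, so singularity is controlled by eigenvalues rather than singular values. First I would rewrite the interval matrix as
\begin{align*}
[A-I,A+I]=\{A+\diag(d):\|d\|_\infty\leq 1\},
\end{align*}
as used elsewhere in the paper. Then regularity amounts to showing that $A+\diag(d)$ is nonsingular for every admissible $d$, equivalently that $0$ is not an eigenvalue.

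For the direction $(\Leftarrow)$, assuming $|\lambda_i(A)|>1$ for all $i$, I would invoke Weyl's inequality for symmetric matrices: for any $D=\diag(d)$ with $\|d\|_\infty\leq 1$ one has $\|D\|_2\leq 1$, and
\begin{align*}
\lambda_i(A)-1\leq\lambda_i(A+D)\leq\lambda_i(A)+1.
\end{align*}
Hence $\lambda_i(A)>1$ forces $\lambda_i(A+D)>0$ while $\lambda_i(A)<-1$ forces $\lambda_i(A+D)<0$; in either case $\lambda_i(A+D)\neq 0$, so $A+D$ is nonsingular.

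For the converse $(\Rightarrow)$, I argue by contraposition: if $\lambda:=\lambda_k(A)$ satisfies $|\lambda|\leq 1$, then the diagonal matrix $D:=-\lambda I$ lies in the interval, i.e.\ $\|{-\lambda e}\|_\infty\leq 1$, and $A+D=A-\lambda I$ is singular because $\lambda$ is an eigenvalue of $A$. Thus $[A-I,A+I]$ is not regular, contradicting the hypothesis.

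I do not anticipate a real obstacle: the only subtlety is making sure the $(\Leftarrow)$ direction handles the possibility that the spectrum of $A$ straddles the interval $[-1,1]$, with some eigenvalues exceeding $1$ and others below $-1$. Weyl's inequality handles this uniformly because it bounds each eigenvalue individually, so no mixed-sign eigenvalue can be driven to zero by a perturbation of spectral norm at most~$1$. Symmetry of $A$ is essential here; without it one would need $\sigma_{\min}$-based arguments that are harder to control.
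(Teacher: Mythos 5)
Your proof is correct and follows essentially the same route as the paper: the forward direction is obtained by observing that $A-\lambda I$ lies in the interval whenever $|\lambda|\leq 1$, and the reverse direction by an eigenvalue perturbation bound (you invoke Weyl's inequality where the paper cites the Bauer--Fike theorem, but for symmetric $A$ these give the same conclusion). Your write-up is in fact somewhat more explicit than the paper's, which labels the two directions in the opposite way to how they are actually argued.
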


\begin{proof}
First, we show "if part". By spectral decomposition, $A$ can be written as $A=UDU^T$, where $U$ is an orthogonal matrix 
and $D=\diag(\lambda(A))$. Suppose that $\nu\in [-1, 1]$. We have $A+\nu I=U(D+\nu I)U^T$. Since $A+\nu I$ is invertible, for $ i=1, ..., n$, $ |\lambda_i(A)|>1$.

The "only if" part is resulted from Bauer-Fike Theorem \cite{Fike}.
\end{proof}

Note that condition \eqref{fak} is equivalent to $\sigma_{\min}(A)>1$.

\begin{proposition}
Let the interval matrix $[A-I, A+I]$ be regular. If $A$ is symmetric, then $c_2(A)=\frac{1}{\sigma_{\min}(A)-1}$.
\end{proposition}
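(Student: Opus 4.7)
The plan is to reduce to $\min_{\|d\|_\infty\leq 1}\sigma_{\min}(A-\diag(d))$ and then control the eigenvalues of the symmetric perturbations $A-\diag(d)$ by Weyl's inequality, sandwiching the minimum between matching lower and upper bounds.

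First, I would unpack the definition: since $\|M^{-1}\|_2=1/\sigma_{\min}(M)$ for any nonsingular $M$, we have
\begin{align*}
c_2(A)=\max_{\|d\|_\infty\leq 1}\frac{1}{\sigma_{\min}(A-\diag(d))}=\frac{1}{\min_{\|d\|_\infty\leq 1}\sigma_{\min}(A-\diag(d))},
\end{align*}
so the claim reduces to showing $\min_{\|d\|_\infty\leq 1}\sigma_{\min}(A-\diag(d))=\sigma_{\min}(A)-1$. Note that by Lemma~\ref{kkkk} (applied to the symmetric matrix $A$), regularity of $[A-I,A+I]$ is equivalent to $\sigma_{\min}(A)>1$, so the right side is positive.

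For the upper bound on the minimum (i.e., lower bound on the matrix inside), I would use that $A-\diag(d)$ is symmetric, so $\sigma_{\min}(A-\diag(d))=\min_i|\lambda_i(A-\diag(d))|$, together with Weyl's inequality for symmetric matrices: since $\|\diag(d)\|_2=\|d\|_\infty\leq 1$, we get $|\lambda_i(A-\diag(d))-\lambda_i(A)|\leq 1$ for every $i$. Consequently $|\lambda_i(A-\diag(d))|\geq |\lambda_i(A)|-1\geq \sigma_{\min}(A)-1$, yielding $\sigma_{\min}(A-\diag(d))\geq \sigma_{\min}(A)-1$.

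For the matching lower bound, I would exhibit an explicit extremizer. Let $\lambda^\star$ be an eigenvalue of $A$ realizing $|\lambda^\star|=\sigma_{\min}(A)$, and set $d:=\sgn(\lambda^\star)\, e$, so that $\diag(d)=\sgn(\lambda^\star)I$ and $\|d\|_\infty=1$. Then $A-\diag(d)=A-\sgn(\lambda^\star)I$ has eigenvalues $\lambda_i(A)-\sgn(\lambda^\star)$. A short case analysis on the sign of $\lambda_i(A)$ (using $|\lambda_i(A)|\geq|\lambda^\star|>1$) shows that the minimum absolute value among these is attained at $\lambda^\star$ itself and equals $|\lambda^\star|-1=\sigma_{\min}(A)-1$. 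Combining both bounds finishes the proof.

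The only subtle point is the final case analysis verifying that the choice $d=\sgn(\lambda^\star)e$ is optimal rather than just feasible; Weyl's inequality then does the heavy lifting for the other direction, and symmetry is essential throughout since the identification $\sigma_{\min}(M)=\min_i|\lambda_i(M)|$ fails for nonsymmetric $M$.
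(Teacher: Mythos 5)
Your proof is correct and follows essentially the same route as the paper: a two-sided squeeze of $\min_{\|d\|_\infty\leq1}\sigma_{\min}(A-\diag(d))$, with the lower bound from a norm-$1$ perturbation estimate and the upper bound from the explicit shift $A\mp I$ acting on the spectral decomposition. The only cosmetic difference is that you invoke Weyl's inequality on eigenvalues where the paper uses the general bound $\sigma_{\min}(A+B)\geq\sigma_{\min}(A)-\|B\|_2$, and you spell out the extremizer and case analysis that the paper delegates to the proof of Lemma~\ref{kkkk}.
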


\begin{proof}
As $[A-I, A+I]$ is regular, $\sigma_{\min}(A)>1$.  For $d$ with $\|d\|_\infty\leq 1$, 
$\sigma_{\min}(A+\diag(d))\geq \sigma_{\min}(A)-1$. 
By the proof of Lemma \ref{kkkk}, it is seen that there exists  $\bar d$ with $\|\bar d\|_\infty= 1$ shch that $\sigma_{\min}(A+\diag(\bar d))= \sigma_{\min}(A)-1$.
 Hence, the proposition follows from formulation~\eqref{OSEr}.
\end{proof} 

\begin{proposition}\label{Prpp}
If $\sigma_{\min}(A)>1$, then
\begin{align}\label{ineqPropcSn}
c_2(A)\leq \frac{1}{\sigma_{\min}(A)-1}.
\end{align}
\end{proposition}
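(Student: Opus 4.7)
The plan is to use the standard Lipschitz continuity of the smallest singular value under additive perturbations (a consequence of Weyl's perturbation inequality) to lower bound $\sigma_{\min}(A - \diag(d))$ uniformly over the box $\|d\|_\infty \leq 1$, and then invert to obtain the desired upper bound.

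First, I would recall that for any square matrix $M$ and any perturbation $E$ of the same size, one has $|\sigma_{\min}(M+E) - \sigma_{\min}(M)| \leq \|E\|_2$. Applied with $M = A$ and $E = -\diag(d)$, this yields
\begin{align*}
\sigma_{\min}(A - \diag(d)) \geq \sigma_{\min}(A) - \|\diag(d)\|_2.
\end{align*}
Since $\|\diag(d)\|_2 = \max_i |d_i| = \|d\|_\infty \leq 1$, and by assumption $\sigma_{\min}(A) > 1$, we obtain $\sigma_{\min}(A - \diag(d)) \geq \sigma_{\min}(A) - 1 > 0$ for every $d$ in the unit $\infty$-ball. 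In particular, $A - \diag(d)$ is invertible for all such $d$, so $[A-I, A+I]$ is regular and the definition of $c_2(A)$ via \eqref{ONEr} applies.

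Next, using $\|(A - \diag(d))^{-1}\|_2 = 1/\sigma_{\min}(A - \diag(d))$, the bound above gives
\begin{align*}
\|(A - \diag(d))^{-1}\|_2 \leq \frac{1}{\sigma_{\min}(A) - 1}, \qquad \forall d : \|d\|_\infty \leq 1.
\end{align*}
Taking the maximum over $\|d\|_\infty \leq 1$ in the definition \eqref{ONEr} of $c_2(A)$ yields $c_2(A) \leq \frac{1}{\sigma_{\min}(A) - 1}$, which is inequality \eqref{ineqPropcSn}.

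No step here is a real obstacle; the only thing to be careful about is invoking the correct perturbation inequality for singular values, which is the direct analogue of Weyl's theorem: singular values are 1-Lipschitz in the operator 2-norm. Note also that this bound is tight in the symmetric case, as shown in the preceding proposition, which confirms that \eqref{ineqPropcSn} is the sharpest one can hope for in full generality from information about $\sigma_{\min}(A)$ alone.
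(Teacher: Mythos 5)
Your proof is correct and follows essentially the same route as the paper: both rest on the perturbation inequality $\sigma_{\min}(A+B)\geq \sigma_{\min}(A)-\|B\|_2$ together with $\|\diag(d)\|_2\leq 1$ and the identity $\|(A-\diag(d))^{-1}\|_2 = 1/\sigma_{\min}(A-\diag(d))$. The only cosmetic difference is that you deduce regularity of $[A-I,A+I]$ directly from the perturbation bound, whereas the paper cites a result of Mangasarian and Meyer for unique solvability; both are valid.
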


\begin{proof}
Note that under the assumption, \eqref{AVE} has a unique solution for any $b$, see Proposition 3 in \cite{mangasarian2}, and consequently $[A-I, A+I]$ is regular. Let
 $\hat d\in\{d:\|d\|_\infty\leq 1\}$. Consider the formulation \eqref{OSEr}. Since,
$\sigma_{\min}(A+B)\geq \sigma_{\min}(A)-\|B\|_2$ and $\max_{\|d\|_\infty\leq 1} \|\diag(d)\|_2=1,$ 
we obtain the desired inequality.
\end{proof} 

In the following example we show that the bound \eqref{ineqPropcSn} can be arbitrary large while  the error bound with respect to $2$-norm is bounded. 

\begin{example}
Let $\epsilon >0$ and
$$A=\frac{\sqrt{2}}{2}\begin{pmatrix}
1 & -1\\
1 & 1
\end{pmatrix}
\begin{pmatrix}
5 & 0\\
0 & 1+\epsilon
\end{pmatrix}.$$
As $\sigma_{\min}(A)=1+\epsilon$, we have the assumption of Proposition~\ref{Prpp}. By Proposition~\ref{Pr1},
\begin{align*}
c_2(A)=\max
 \left\{\|(A-I)^{-1}\|_2, \|(A-E)^{-1}\|_2, \|(A+E)^{-1}\|_2, \|(A+I)^{-1}\|_2\right\},
\end{align*}
where $E=\begin{pmatrix}
-1 & 0\\
0 & 1
\end{pmatrix}$. With a little algebra, it is seen that $ c_2(A)\leq 6$, while $\frac{1}{\sigma_{\min}(A)-1}$ goes to infinity as $\epsilon$ tends to zero.
\end{example}

 For matrix $A$, let 
\begin{align*}
r_i(A)=\sum_{\substack {j=1\\ j\neq i}}^n |A_{ij}|, \ \ \cl_i(A)=\sum_{\substack {j=1\\ j\neq i}}^n |A_{ji}|.
\end{align*}

\begin{proposition}\label{GFGF} 
Let $\bar d=\sgn(\Diag(A))$. If 
\begin{align*}
\alpha:=\min_{i=1, ..., n}\{|A_{ii}|-\textstyle\frac{1}{2}(r_i(A)+\cl_i(A))\}>1, 
\end{align*}
then $c_2(A)= \| (A-\diag(\bar d))^{-1}\|_2$. 
\end{proposition}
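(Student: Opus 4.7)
The plan is to combine Proposition~\ref{Pr1} with a sign conjugation that makes the diagonal of $A$ positive, and then to apply an Ostrowski-type lower bound on the smallest singular value based on Gershgorin's theorem applied to the Hermitian part.

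First, by Proposition~\ref{Pr1}, the maximum defining $c_2(A)$ is attained at a vertex $d \in \{\pm 1\}^n$, so it suffices to compare $\sigma_{\min}(A - \diag(d)) = 1/\|(A - \diag(d))^{-1}\|_2$ across such vertices. Multiplying by the orthogonal diagonal sign matrix $S = \diag(\bar d)$ preserves singular values, so I would reformulate via $\tilde A := SA$ and $\tilde d := \bar d \circ d$: the matrix $\tilde A$ has positive diagonal $|A_{ii}|$ and the same off-diagonal absolute row and column sums as $A$ (so the hypothesis on $\alpha$ transfers unchanged), and $\tilde d$ ranges over all vertices as $d$ does, with $d = \bar d$ corresponding to $\tilde d = e$.

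The key tool is the Ostrowski-type bound: for any $N$ with $H(N) := \tfrac{1}{2}(N + N^T) \succeq 0$,
\[
\sigma_{\min}(N) \;\geq\; \lambda_{\min}(H(N)) \;\geq\; \min_i\bigl\{N_{ii} - \tfrac{1}{2}(r_i(N) + c_i(N))\bigr\},
\]
where the second inequality is Gershgorin applied to $H(N)$. Applied to $N = \tilde A - \diag(\tilde d)$, whose diagonal is $|A_{ii}| - \tilde d_i$ and whose off-diagonal sums are $r_i(A)$ and $c_i(A)$, this yields
\[
\sigma_{\min}(A - \diag(d))\;=\;\sigma_{\min}(\tilde A - \diag(\tilde d)) \;\geq\; \min_i\{|A_{ii}| - \tilde d_i - \tfrac12(r_i + c_i)\} \;\geq\; \alpha - 1 > 0,
\]
uniformly in $d$, with the middle expression minimized over vertices precisely at $\tilde d = e$, i.e.\ $d = \bar d$.

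The main obstacle is to upgrade this monotonicity of the \emph{bound} into monotonicity of $\sigma_{\min}$ itself, by establishing that the Ostrowski estimate is tight at the base point, namely $\sigma_{\min}(\tilde A - I) = \alpha - 1$. I would attempt this by producing an explicit right singular vector of $\tilde A - I$ at the smallest singular value, taking the Perron--Frobenius eigenvector of the symmetric non-negative matrix with $(i,j)$-entry $\tfrac12(|\tilde A_{ij}| + |\tilde A_{ji}|)$ for $i\neq j$ and zero on the diagonal, and combining it with a sign pattern compatible with $|A_{ii}| - 1 > 0$; the strict dominance $\alpha > 1$ is what forces the index $i^\star$ realizing the minimum in $\alpha$ to drive the singular value to its Gershgorin bound. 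Once this tightness is in hand, the uniform inequality $\sigma_{\min}(A - \diag(d)) \geq \alpha - 1 = \sigma_{\min}(A - \diag(\bar d))$ for every $\|d\|_\infty \leq 1$ identifies $\bar d$ as a maximizer and yields $c_2(A) = \|(A - \diag(\bar d))^{-1}\|_2$ at once.
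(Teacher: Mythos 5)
Your reduction to vertices via Proposition~\ref{Pr1}, the sign normalization $\tilde A=\diag(\bar d)A$, and the uniform lower bound $\sigma_{\min}(A-\diag(d))\geq\lambda_{\min}\bigl(\tfrac12(N+N^T)\bigr)\geq\alpha-1$ are all correct; the last is essentially how the paper itself establishes regularity of $[A-I,A+I]$ (citing Johnson's bound). The genuine gap is the step you yourself flag as the ``main obstacle'': the claim that the estimate is tight at the base point, i.e.\ $\sigma_{\min}(\tilde A-I)=\alpha-1$. This is false in general, and no construction of a singular vector from a Perron--Frobenius eigenvector will rescue it. Take
\begin{align*}
A=\begin{pmatrix} 3 & 1\\ 0 & 3\end{pmatrix},
\end{align*}
so that $\bar d=e$, $\tilde A=A$ and $\alpha=3-\tfrac12=\tfrac52>1$. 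Then $(A-I)^T(A-I)=\left(\begin{smallmatrix}4&2\\2&5\end{smallmatrix}\right)$ has smallest eigenvalue $(9-\sqrt{17})/2$, hence $\sigma_{\min}(A-I)=\sqrt{(9-\sqrt{17})/2}\approx 1.5616>1.5=\alpha-1$. Since the common lower bound $\alpha-1$ is not attained at $d=\bar d$, the chain $\sigma_{\min}(A-\diag(d))\geq\alpha-1=\sigma_{\min}(A-\diag(\bar d))$ breaks at the equality; and the observation that the \emph{lower bound} $\min_i\{|A_{ii}|-\tilde d_i-\tfrac12(r_i(A)+\cl_i(A))\}$ is smallest at $\tilde d=e$ says nothing about where $\sigma_{\min}$ itself is smallest. (In this example $\bar d=e$ does happen to minimize $\sigma_{\min}(A-\diag(d))$ over the four vertices, but your argument does not show it.)

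The paper avoids any tightness claim by making a \emph{relative} comparison rather than comparing both sides to a common Gershgorin constant: for a vertex $d$ it forms
\begin{align*}
T=(A-\diag(d))^T(A-\diag(d))-(A-\diag(\bar d))^T(A-\diag(\bar d))
 =\diag(\bar d-d)A+A^T\diag(\bar d-d)
\end{align*}
and argues that $T$ is positive semidefinite, so that $\lambda_{\min}$ of the Gram matrix is minimized at $d=\bar d$ by Weyl's monotonicity; the hypothesis $\alpha>1$ is used only to guarantee regularity (via the same Johnson bound you invoke), not to force tightness of any Gershgorin estimate. If you want to complete your proof, this pairwise comparison of Gram matrices is the missing idea; your Ostrowski bound can supply the regularity part but cannot identify the maximizer.
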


\begin{proof}
Let $d\in\{d:\|d\|_\infty\leq 1\}$. By Theorem 3 in \cite{johnson}, $\sigma_{\min}(A-\diag(d))\geq \alpha-1$. So $[A-I, A+I]$ is regular. 
Since $\|A^{-1}\|^{-2}_2 = \lambda_{\min}(A^TA)$, by Proposition~\ref{Pr1},
$c_2(A)^{-2}=\min_{|d|=e}\lambda_{\min}\big((A-\diag(d))^T(A-\diag(d))\big)$. Suppose that $|d|=e$. Consider matrix  
\begin{align*}
T&=(A-\diag(d))^T(A-\diag(d))-(A-\diag(\bar d))^T(A-\diag(\bar d))\\
&=\diag(\bar d)A+A^T\diag(\bar d)-\diag( d)A-A^T\diag( d).
\end{align*}
It is easily seen that $T$ is diagonally dominant with nonnegative diagonal, so it is positive semi-definite. Consequently, 
$\lambda_{\min}\big((A-\diag(d))^T(A-\diag(d))\big)\geq \lambda_{\min}\big((A-\diag(\bar d))^T(A-\diag(\bar d))\big)$,
which implies the desired equality.
\end{proof} 

Note that under the assumptions of Proposition \ref{GFGF}, we also have the following bound
\begin{align*}
c_2(A)\leq \frac{1}{\alpha-1}.
\end{align*}
As for a permutation matrix $P$, $\|AP\|_2=\|A\|_2$ and $[-I, I]P=[-I, I]$, the following corollary gives a more generalized form of Proposition \ref{GFGF}. 

\begin{corollary}\label{GFGFGF} 
Let $P$ be a permutation matrix and $B=AP$.  If 
\begin{align*}
\alpha:=\min_{i=1, ..., n}\{|B_{ii}|-\textstyle\frac{1}{2}(r_i(B)+\cl_i(B))\}>1, 
\end{align*}
then $c_2(A)= \| (B-\diag(\bar d))^{-1}\|_2$, where  $\bar d=\sgn(\Diag(B))$.
\end{corollary}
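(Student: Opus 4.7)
The strategy is to reduce the statement to Proposition~\ref{GFGF} applied directly to $B$. Since $B$ satisfies the hypothesis $\alpha>1$ by assumption, Proposition~\ref{GFGF} yields $c_2(B) = \|(B - \diag(\bar d))^{-1}\|_2$ with $\bar d = \sgn(\Diag(B))$. It therefore suffices to prove the invariance $c_2(A) = c_2(B)$ whenever $B = AP$ for a permutation matrix $P$.

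The plan for the invariance is to use the factorization $A - \diag(d) = (B - \diag(d)P)P^T$, which follows from $A = BP^T$ together with $PP^T = I$, and then exploit that $P$ is orthogonal. Taking inverses gives $(A - \diag(d))^{-1} = P(B - \diag(d)P)^{-1}$, and unitary invariance of the spectral norm yields $\|(A - \diag(d))^{-1}\|_2 = \|(B - \diag(d)P)^{-1}\|_2$ for every $d$ with $\|d\|_\infty\le 1$. Maximizing over $d$ therefore gives
\begin{align*}
c_2(A) = \max_{\|d\|_\infty\le 1} \|(B - \diag(d)P)^{-1}\|_2.
\end{align*}

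The remaining step is to identify this maximum with $c_2(B)$. Each matrix $\diag(d)P$ has a single nonzero entry per row, of absolute value $|d_i|\le 1$, so it lies entrywise in the interval matrix $[-I,I]$. Using the invariance $[-I,I]P = [-I,I]$ noted just before the corollary, together with Proposition~\ref{Pr1} which localizes the extremum at vertices of the feasible cube, the family $\{\diag(d)P:\|d\|_\infty\le 1\}$ is seen to contribute the same extremal value as the family of diagonal perturbations $\{\diag(d'):\|d'\|_\infty\le 1\}$ that defines $c_2(B)$. Combining with the first paragraph gives $c_2(A) = c_2(B) = \|(B - \diag(\bar d))^{-1}\|_2$.

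The main obstacle is the last identification: the sets $\{\diag(d)P\}$ and $\{\diag(d')\}$ are disjoint subsets of $[-I,I]$ whenever $P\ne I$, so equality of the two maxima is not automatic. One must carefully match extremal candidates on both sides using the orthogonality of $P$ and the vertex-optimality from Proposition~\ref{Pr1}, or alternatively observe that under $\alpha>1$ the extreme value of $\|(B-M)^{-1}\|_2$ over the whole interval matrix $[-I,I]$ is already attained at the diagonal $M=\diag(\bar d)$ identified by Proposition~\ref{GFGF}, which then pins down both maxima to the same quantity.
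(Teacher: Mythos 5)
Your reduction $c_2(A)=\max_{\|d\|_\infty\le 1}\|(B-\diag(d)P)^{-1}\|_2$ via the factorization $A-\diag(d)=(B-\diag(d)P)P^T$ and unitary invariance is correct, and it is exactly the route the paper intends (its one\mbox{-}line justification rests on $\|AP\|_2=\|A\|_2$ together with the identity ``$[-I,I]P=[-I,I]$''). But the step you flag as the ``main obstacle'' is a genuine gap, and it cannot be closed. The set $\{\diag(d)P:\|d\|_\infty\le 1\}$ is the interval matrix $[-P,P]$: for $P\ne I$ its members have all their nonzero entries \emph{off} the diagonal, so --- contrary to what you write --- they are not elements of $[-I,I]$ at all, and the paper's identity $[-I,I]P=[-I,I]$ is simply false. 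Hence the two maxima range over genuinely different perturbation sets; Proposition~\ref{Pr1} localizes the extremum only over diagonal perturbations and says nothing about $[-P,P]$; and your fallback --- that under $\alpha>1$ the extremum over the whole of $[-I,I]$ (let alone $[-P,P]$) is already attained at $\diag(\bar d)$ --- is precisely the assertion that fails.

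In fact the corollary is false as stated. Take $P=\left(\begin{smallmatrix}0&1\\1&0\end{smallmatrix}\right)$ and $B=\left(\begin{smallmatrix}3&0.1\\-0.1&3\end{smallmatrix}\right)$, so $\alpha=2.9>1$ and $\bar d=e$. Then $(B-I)^T(B-I)=4.01\,I$, so $\|(B-\diag(\bar d))^{-1}\|_2=1/\sqrt{4.01}\approx 0.49938$. But $A=BP^T=\left(\begin{smallmatrix}0.1&3\\3&-0.1\end{smallmatrix}\right)$ gives the symmetric matrix $A-I=\left(\begin{smallmatrix}-0.9&3\\3&-1.1\end{smallmatrix}\right)$ with eigenvalues $(-2\pm\sqrt{36.04})/2$, whence $\sigma_{\min}(A-I)\approx 2.00167<\sqrt{4.01}\approx 2.00250$ and $c_2(A)\ge\|(A-I)^{-1}\|_2\approx 0.49958>\|(B-\diag(\bar d))^{-1}\|_2$. (The same data applied to $B$ with $d=(1,-1)$ also contradicts Proposition~\ref{GFGF} itself: the matrix $T$ in its proof fails to be diagonally dominant when $d_i=\bar d_i$ for some but not all $i$, and here $\sigma_{\min}(B-\diag((1,-1)))\approx 2.00167<\sigma_{\min}(B-I)$.) So no completion of your argument --- nor of the paper's --- can succeed; at best one could hope for an inequality or for the result under strictly stronger hypotheses.
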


\subsection{Condition number of AVE for $\infty$-norm}

Some upper bounds were proposed for $\|A^{-1}\|_\infty$ and $\|A^{-1}\|_1$; see \cite{kolotilina, Li, moravca, varah}. As Theorem \ref{TB} holds for any scaling $p$-norm, it would be advantageous to use these norms.

\begin{proposition}\label{PP}
If $(A-I)^{-1}\geq 0$ and $(A+I)^{-1}\geq 0$, 
then $c_\infty(A)=\|(A-I)^{-1}e\|_\infty$.
\end{proposition}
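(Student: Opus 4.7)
The plan is to reduce the maximization in \eqref{ONEr} to the vertices of the box $\{d:\|d\|_\infty\leq 1\}$ via Proposition~\ref{Pr1}, and then to use monotonicity of the inverse to show that the maximizer among those $2^n$ vertices is $d=e$, i.e.\ $\diag(d)=I$.

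First I would argue that $[A-I,A+I]$ is in fact regular, and, more strongly, that every $M\in[A-I,A+I]$ satisfies $M^{-1}\geq 0$. This is a classical interval-linear-algebra fact (Kuttler; see also Rohn): an interval matrix both of whose endpoints are inverse-nonnegative is itself inverse-nonnegative. With regularity in hand, Proposition~\ref{Pr1} applies and $c_\infty(A)=\max_{|d|=e}\|(A-\diag(d))^{-1}\|_\infty$. Since $(A-\diag(d))^{-1}\geq 0$ at every such vertex, its induced $\infty$-norm is just the maximum row sum of a nonnegative matrix, namely $\|(A-\diag(d))^{-1}e\|_\infty$.

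The key step is monotonicity of the inverse. For any $d$ with $|d|\leq e$, set $B:=A-I$ and $C:=A-\diag(d)$, so that $C-B=\diag(e-d)\geq 0$. The resolvent identity
\[
B^{-1}-C^{-1} \;=\; B^{-1}(C-B)C^{-1} \;=\; (A-I)^{-1}\,\diag(e-d)\,(A-\diag(d))^{-1}
\]
presents the left-hand side as a product of three entrywise nonnegative matrices, hence $(A-\diag(d))^{-1}\leq (A-I)^{-1}$ entrywise. Applying $e$ on the right and taking $\infty$-norms, $\|(A-\diag(d))^{-1}e\|_\infty\leq \|(A-I)^{-1}e\|_\infty$, with equality at $d=e$. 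This yields the claimed identity.

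The main obstacle is justifying the nonnegative-inverse property on the \emph{entire} interval rather than only at the two prescribed endpoints $A\pm I$, since both steps above (signing the resolvent identity and rewriting the $\infty$-norm as a row sum) require $(A-\diag(d))^{-1}\geq 0$ at every vertex $|d|=e$, not just at $d=\pm e$. If one prefers to avoid invoking Kuttler's theorem as a black box, an alternative is a homotopy/continuity argument: any vertex $|d|=e$ is connected to $e$ by flipping one coordinate at a time, and a singularity of $A-\diag(d)$ along such a one-parameter path can be ruled out by the Sherman--Morrison perturbation identity already exploited in the proof of Proposition~\ref{Pr1}, after which $(A-\diag(d))^{-1}\geq 0$ is transferred from $(A-I)^{-1}$ via the same resolvent identity applied iteratively.
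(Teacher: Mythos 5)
Your proof is correct and follows essentially the same route as the paper: both rest on Kuttler's theorem to obtain inverse nonnegativity of every matrix in $[A-I,A+I]$, combined with the fact that the $\infty$-norm of a nonnegative matrix equals the norm of its row-sum vector. The only difference is that you re-derive the monotonicity $(A-\diag(d))^{-1}\leq(A-I)^{-1}$ via the resolvent identity, whereas the paper reads it off directly from Kuttler's conclusion $[A-I,A+I]^{-1}=[(A+I)^{-1},(A-I)^{-1}]$; note also that your appeal to Proposition~\ref{Pr1} is unnecessary, since the resolvent argument already covers every $d$ with $\|d\|_\infty\leq 1$.
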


\begin{proof}
By Kuttler's theorem \cite{Kuttler}, under the assumptions of the proposition, the interval matrix $[A-I, A+I]$ is regular and inverse nonnegative. In addition,  $[A-I, A+I]^{-1}=[(A+I)^{-1}, (A-I)^{-1}]$. It is easily seen that for any non-negative matrix $M$ we have $\|M\|_\infty=\|Me\|_\infty$. Hence, $c_\infty(A)=\|(A-I)^{-1}e\|_\infty$.
\end{proof} 

\begin{proposition}\label{BAISR}
If $\rho(|A^{-1}|)<1$, then 
\begin{align}
c_\infty(A)\leq \|\max(|B_1|, |B_2|)\|_\infty.
\end{align}
where $H=(I-|A^{-1}|)^{-1}$, $T=(2\diag(\Diag(H))-I)^{-1}$ and 
\begin{align*}
& B_1=\min\{-H|A^{-1}|+T(A^{-1}+|A^{-1}|), T(-H|A^{-1}|+T(A^{-1}+|A^{-1}|))\},\\
& B_2=\max\{H|A^{-1}|+T(A^{-1}-|A^{-1}|), T(H|A^{-1}|+T(A^{-1}-|A^{-1}|))\}
\end{align*} 
\end{proposition}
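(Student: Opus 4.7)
The plan is to invoke Proposition~\ref{Pr1} to reduce $c_\infty(A)$ to the maximum of $\|(A-D)^{-1}\|_\infty$ over diagonal matrices $D=\diag(d)$ with $|d|=e$, and then produce $d$-independent componentwise two-sided bounds $B_1\le X\le B_2$ on $X:=(A-D)^{-1}$. Once such a sandwich is in hand, $|X_{ij}|\le\max(|B_{1,ij}|,|B_{2,ij}|)$ entrywise, and monotonicity of the $\infty$-norm on nonnegative matrices yields the desired inequality. The starting point is the resolvent identity $X=A^{-1}+A^{-1}DX$, read off from $(A-D)X=I$. Since $\rho(|A^{-1}|)<1$, applying Neumann series to $|X|\le|A^{-1}|+|A^{-1}|\,|X|$ gives the coarse estimate $|X|\le H|A^{-1}|$. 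Plugging this back into the identity and using $|A^{-1}D|=|A^{-1}|$ together with $|A^{-1}|H=H-I$ produces a first pair of bounds, rearranging to
\begin{align*}
-H|A^{-1}|+(A^{-1}+|A^{-1}|)\le X\le H|A^{-1}|+(A^{-1}-|A^{-1}|).
\end{align*}
These match $U_1,U_2$ of the proposition \emph{without} the $T$ factor multiplying $A^{-1}\pm|A^{-1}|$.

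To bring in $T$, I would exploit that $D$ is diagonal and analyze the $(i,i)$-component of $X$ separately. Isolating the $k=i$ contribution in the identity gives the scalar relation
\begin{align*}
X_{ii}\bigl(1-(A^{-1})_{ii}d_i\bigr)=(A^{-1})_{ii}+\sum_{k\neq i}(A^{-1})_{ik}d_k X_{ki},
\end{align*}
into which the coarse bound on $X_{ki}$ may be substituted. A second resolvent step applied to the diagonal block---combined with the identity $2H-I$ being precisely the normalizing denominator when the inner geometric sum for $H_{ii}$ is written as $1+|A^{-1}|_{ii}+\sum_{k\ge 2}(|A^{-1}|^k)_{ii}$---produces the multiplier $(2H_{ii}-1)^{-1}=T_{ii}$ in the refined diagonal estimate. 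Feeding this improved diagonal bound back into the off-diagonal part of $X=A^{-1}+A^{-1}DX$ gives the alternative pair $TU_1\le X\le TU_2$. Because neither of the two pairs is uniformly tighter across all sign patterns of $d$ (one dominates when $(A^{-1})_{ii}d_i$ is positive, the other when it is negative), the componentwise $\min$ of the two lower bounds and $\max$ of the two upper bounds produces a single sandwich $B_1\le X\le B_2$ valid uniformly over all vertices $|d|=e$.

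Maximizing over $d$ and applying the monotone $\infty$-norm step then finishes the proof. The principal obstacle is the diagonal refinement: extracting the multiplier $T_{ii}=(2H_{ii}-1)^{-1}$ requires combining the scalar equation for $X_{ii}$ with the coarse bound in exactly the right order, and one must carefully check that the $\min/\max$ envelope correctly covers every sign pattern of $d$ rather than merely a generic one---in particular, that the cases $(A^{-1})_{ii}d_i>0$ and $(A^{-1})_{ii}d_i<0$ are separately handled by the two candidate bounds and jointly by their componentwise extremes.
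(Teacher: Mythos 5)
The paper's own proof is a two-line citation: Theorem~2.40 in \cite{Rohn2} already asserts the enclosure $[A-I,A+I]^{-1}\subseteq[B_1,B_2]$, and the only work left is the elementary estimate $\|X\|_\infty\le\|\max(|B_1|,|B_2|)\|_\infty$ for $X\in[B_1,B_2]$. You instead set out to re-derive the enclosure. The first half of your derivation is correct and complete: from $X=(A-D)^{-1}=A^{-1}+A^{-1}DX$ and $\rho(|A^{-1}|)<1$ you get $|X|\le H|A^{-1}|$ and hence the coarse sandwich $A^{-1}+|A^{-1}|-H|A^{-1}|\le X\le A^{-1}-|A^{-1}|+H|A^{-1}|$. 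The second half---extracting the diagonal multiplier $T_{ii}=(2H_{ii}-1)^{-1}$ via ``a second resolvent step'' and then arguing that the componentwise $\min$/$\max$ of two candidate pairs covers every sign pattern of $d$---is a description of a proof rather than a proof, and you flag it yourself as the principal obstacle. As written this is a genuine gap: the $T$-refinement is precisely the nontrivial content of the Hansen--Bliek--Rohn/Rohn enclosure theorem, and nothing in your text establishes the claimed inequalities $TU_1\le X\le TU_2$ or the case analysis on $\operatorname{sgn}((A^{-1})_{ii}d_i)$.

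The gap is, however, avoidable, because for the one-sided enclosure needed here the refinement is superfluous. Since $H=\sum_{k\ge0}|A^{-1}|^k\ge I$ entrywise, $T=(2\diag(\Diag(H))-I)^{-1}$ is diagonal with entries in $(0,1]$; combined with $A^{-1}+|A^{-1}|\ge 0\ge A^{-1}-|A^{-1}|$ this gives, entrywise,
\begin{align*}
-H|A^{-1}|+T(A^{-1}+|A^{-1}|)&\le -H|A^{-1}|+(A^{-1}+|A^{-1}|)\le X\\
&\le H|A^{-1}|+(A^{-1}-|A^{-1}|)\le H|A^{-1}|+T(A^{-1}-|A^{-1}|).
\end{align*}
Since $B_1$ is the componentwise $\min$ of the leftmost matrix above with a further candidate, and $B_2$ the componentwise $\max$ of the rightmost with a further candidate, $B_1\le X\le B_2$ already follows from your coarse bound; the $\min$/$\max$ structure and the factor $T$ matter only for the sharpness of Rohn's formula (its being the exact interval hull), which the proposition does not claim. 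Your final norm step is fine, and the reduction to vertices $|d|=e$ via Proposition~\ref{Pr1} is harmless but unnecessary, since your bounds hold for all $\|d\|_\infty\le1$.
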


\begin{proof}
By Theorem 2.40 in \cite{Rohn2}, $[A-I, A+I]^{-1}\subseteq [B_1, B_2]$. Thus,
\begin{align*}
c_\infty(A)
 = \max_{\|d\|_{\infty}\leq 1} \|(A-\diag(d))^{-1}\|_\infty
 \leq \max_{X\in [B_1, B_2]} \|X\|_\infty\leq \|\max(|B_1|, |B_2|)\|_\infty.
\end{align*}
\end{proof} 

\begin{proposition}
Let $A$ be an M-matrix. If $\rho(A^{-1})<1$, then 
\begin{align}
c_{\infty}(A)= \| (A- I)^{-1}e\|_{\infty}.
\end{align}
\end{proposition}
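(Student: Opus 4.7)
The plan is to reduce this proposition to Proposition~\ref{PP} by verifying its two hypotheses, namely $(A-I)^{-1}\ge 0$ and $(A+I)^{-1}\ge 0$, using the M-matrix structure together with the spectral bound $\rho(A^{-1})<1$. Since $A$ is an M-matrix, $A^{-1}\ge 0$, so $|A^{-1}|=A^{-1}$ and all hypothesis involving $\rho(|A^{-1}|)$ coincide with those involving $\rho(A^{-1})$.

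First I would handle the easier inclusion $(A+I)^{-1}\ge 0$. Writing $A=sI-B$ with $B\ge 0$ and $s>\rho(B)$ (which is always possible for an M-matrix by choosing $s>\max_i A_{ii}$), one sees that $A+I=(s+1)I-B$ with $s+1>\rho(B)$, so $A+I$ is again an M-matrix, hence has a nonnegative inverse.

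For the delicate inclusion $(A-I)^{-1}\ge 0$, I would use the factorization
\begin{align*}
(A-I)^{-1}=\bigl(A(I-A^{-1})\bigr)^{-1}=(I-A^{-1})^{-1}A^{-1}.
\end{align*}
Since $A^{-1}\ge 0$ and $\rho(A^{-1})<1$, the Neumann series gives
\begin{align*}
(I-A^{-1})^{-1}=\sum_{k=0}^{\infty}A^{-k}\ge 0,
\end{align*}
and multiplying by the nonnegative matrix $A^{-1}$ on the right preserves nonnegativity. In particular $A-I$ is invertible and $(A-I)^{-1}\ge 0$.

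Having verified both hypotheses of Proposition~\ref{PP}, its conclusion immediately yields $c_\infty(A)=\|(A-I)^{-1}e\|_\infty$. The only real subtlety is the verification of $(A-I)^{-1}\ge 0$, and I expect that step to be the main obstacle: the hypothesis $\rho(A^{-1})<1$ is used precisely here, and the cleanest route seems to be the Neumann-series factorization above rather than a direct M-matrix characterization of $A-I$ (although one could alternatively argue that $\rho(A^{-1})<1$ forces $s-\rho(B)>1$, so that $A-I=(s-1)I-B$ is itself an M-matrix, giving the same conclusion).
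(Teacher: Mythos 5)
Your proposal is correct and follows the same route as the paper: verify that both $(A-I)^{-1}\ge 0$ and $(A+I)^{-1}\ge 0$ and then invoke Proposition~\ref{PP} (i.e., Kuttler's theorem). The only difference is in how those two hypotheses are checked --- the paper cites M-matrix theorems from the literature to conclude that $A-I$ and $A+I$ are M-matrices, whereas you verify inverse-nonnegativity directly and self-containedly via the splitting $A=sI-B$ for $A+I$ and the Neumann-series factorization $(A-I)^{-1}=(I-A^{-1})^{-1}A^{-1}=\sum_{k\ge 0}(A^{-1})^{k}A^{-1}\ge 0$ for $A-I$, both of which are valid.
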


\begin{proof}
By virtue of  Theorem 3.6.3  in \cite{Arnold}, $A-I$ is an M-matrix. In addition,  as  M-matrices are preserved by the addition of positive diagonal matrices \cite{Berman}, $A+I$ is also an M-matrix. Hence, by Kuttler's theorem \cite{Kuttler},  $[A-I,   A+I]$ is 
inverse nonnegative, and we proceed as in the proof of Proposition~\ref{PP}.
\end{proof} 

\begin{proposition}
Let $A$ be an H-matrix. If $\rho(\langle A\rangle ^{-1})<1$, then 
\begin{align}
c_{\infty}(A)\leq \|(\langle  A\rangle-I)^{-1}e\|_{\infty}.
\end{align}
\end{proposition}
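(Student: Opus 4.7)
The plan is to reduce the bound to a comparison-matrix estimate by showing that every $A-\diag(d)$ with $\|d\|_\infty\le 1$ is itself an H-matrix whose comparison matrix dominates $\langle A\rangle-I$, and then transferring the inequality across inverses via the usual monotonicity of M-matrix inverses.

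First, I would verify that $\langle A\rangle-I$ is an M-matrix. Its off-diagonals coincide with those of $\langle A\rangle$ and are thus nonpositive. Since $\rho(\langle A\rangle^{-1})<1$ and $\langle A\rangle^{-1}\ge 0$, the Neumann series gives $(I-\langle A\rangle^{-1})^{-1}=\sum_{k\ge 0}\langle A\rangle^{-k}\ge 0$, and from the factorization $\langle A\rangle-I=\langle A\rangle(I-\langle A\rangle^{-1})$ one obtains $(\langle A\rangle-I)^{-1}=(I-\langle A\rangle^{-1})^{-1}\langle A\rangle^{-1}\ge 0$. Hence $\langle A\rangle-I$ is indeed an M-matrix.

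Next, fix $d$ with $\|d\|_\infty\le 1$ and compare $\langle A-\diag(d)\rangle$ with $\langle A\rangle-I$ entrywise. The off-diagonal entries agree, while on the diagonal the reverse triangle inequality yields $|A_{ii}-d_i|\ge |A_{ii}|-1$. Thus $\langle A-\diag(d)\rangle\ge \langle A\rangle-I$, and since the dominant matrix is a Z-matrix bounding an M-matrix from above, the standard characterization of M-matrices (a Z-matrix that dominates an M-matrix entrywise is itself an M-matrix) forces $\langle A-\diag(d)\rangle$ to be an M-matrix. In particular $A-\diag(d)$ is an H-matrix, so the classical bound $|(A-\diag(d))^{-1}|\le \langle A-\diag(d)\rangle^{-1}$ applies. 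Combining this with the monotonicity of M-matrix inverses ($M_1\ge M_2\ge 0$ for Z-matrix $M_1$ and M-matrix $M_2$ gives $M_1^{-1}\le M_2^{-1}$) we obtain
\begin{align*}
|(A-\diag(d))^{-1}|\le \langle A-\diag(d)\rangle^{-1}\le (\langle A\rangle-I)^{-1}.
\end{align*}

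Finally, since $(\langle A\rangle-I)^{-1}\ge 0$ its $\infty$-norm equals $\|(\langle A\rangle-I)^{-1}e\|_\infty$, and by the monotonicity of the $\infty$-norm with respect to entrywise absolute value (Proposition \ref{propGenUpperBound} relies on exactly this fact) we conclude
\begin{align*}
\|(A-\diag(d))^{-1}\|_\infty\le \|(\langle A\rangle-I)^{-1}\|_\infty=\|(\langle A\rangle-I)^{-1}e\|_\infty,
\end{align*}
uniformly in $d$; taking the maximum and invoking the definition of $c_\infty(A)$ finishes the argument. The only nontrivial step is the M-matrix/Z-matrix comparison argument used twice — once to promote $\langle A-\diag(d)\rangle$ to an M-matrix and once to invert the ordering when passing to inverses — but both are standard tools from the theory of M- and H-matrices and require no new computation.
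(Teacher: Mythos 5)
Your proof is correct, and it reaches the same chain of inequalities as the paper, namely $|(A-\diag(d))^{-1}|\leq \langle A-\diag(d)\rangle^{-1}\leq (\langle A\rangle-I)^{-1}$, but by a noticeably more self-contained route. The paper gets there by citing interval-matrix machinery: Theorem~3.7.5 of the Neumaier reference to certify that $[A-I,A+I]$ is an (interval) H-matrix, Kuttler's theorem to identify $[\langle A\rangle-I,\langle A\rangle+I]^{-1}$ with the interval $[(\langle A\rangle+I)^{-1},(\langle A\rangle-I)^{-1}]$, and Varga's bound $\|A^{-1}\|_\infty\leq\|\langle A\rangle^{-1}\|_\infty$. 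You instead work pointwise in $d$: you verify directly via the Neumann series that $\langle A\rangle-I$ is an M-matrix, observe the entrywise domination $\langle A-\diag(d)\rangle\geq\langle A\rangle-I$, and then invoke two classical M-matrix facts (a Z-matrix dominating an M-matrix is an M-matrix, and domination reverses under inversion) together with Ostrowski's inequality $|M^{-1}|\leq\langle M\rangle^{-1}$. What your version buys is independence from the interval-analysis literature and an explicit proof that each $A-\diag(d)$ is an H-matrix (hence that $[A-I,A+I]$ is regular), which the paper delegates to a citation; what it gives up is the slightly stronger structural information the paper extracts, namely the exact two-sided enclosure of the inverse interval. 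Both arguments are sound, and all the auxiliary facts you use are standard (e.g.\ Berman--Plemmons or Horn--Johnson), so there is no gap.
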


\begin{proof}
Under the assumption, Theorem 3.7.5 in \cite{Arnold} implies that interval matrix $[A-I, A+I]$ is an H-matrix. So, $[A-I, A+I]$ is regular. In addition,  
$\langle [A-I, A+I]\rangle=[\langle  A\rangle-I,  \langle  A\rangle+I]$. 
By Kuttler's theorem \cite{Kuttler},  
$[\langle  A\rangle-I,  \langle  A\rangle+I]^{-1}=[(\langle  A\rangle+I)^{-1},  (\langle  A\rangle-I)^{-1}]$. Because $(\langle  A\rangle+I)^{-1}\geq 0$,
\begin{align*}
c_{\infty}(A)=\max_{\|d\|_\infty\leq 1} \ \|(A-\diag(d))^{-1}\|_{\infty} \\\leq \max_{\|d\|_\infty\leq 1} \ \|\langle A-\diag(d)\rangle ^{-1}\|_{\infty} \\= \|(\langle  A\rangle-I)^{-1}e\|_{\infty},
\end{align*}
where the first inequality follows form the fact that for H-matrix $A$, $\|A^{-1}\|_{\infty}\leq \|\langle  A\rangle^{-1}\|_{\infty}$; see Theorem 1 in \cite{Vagra}.
\end{proof} 

\begin{proposition}
Let $r>0$ and $\VERT x \VERT:=\| \diag(r)^{-1}x\|_\infty$. If 
\begin{align*}
\alpha:=\min_{i=1, ..., n}\{|A_{ii}|-1-r_i^{-1}\sum_{j\neq i} r_j|A_{ij}|\}>0, 
\end{align*}
then $c_{\VERT\cdot\VERT}(A)\leq \frac{1}{\alpha}$. 
\end{proposition}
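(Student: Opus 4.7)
The plan is to reduce the bound on $c_{\VERT\cdot\VERT}(A)$ to a weighted version of Varah's bound for strictly diagonally dominant matrices. The scaled $\infty$-norm $\VERT x\VERT=\|\diag(r)^{-1}x\|_\infty$ induces the matrix norm
\begin{align*}
\VERT M \VERT \,=\, \|\diag(r)^{-1} M\diag(r)\|_\infty \,=\, \max_i \frac{1}{r_i} \sum_j r_j |M_{ij}|,
\end{align*}
obtained by substituting $y=\diag(r)^{-1}x$ in the definition. This identifies $\VERT B^{-1}\VERT$ with the ordinary $\infty$-norm of the similarity transform $\diag(r)^{-1}B^{-1}\diag(r)=(\diag(r)^{-1}B\diag(r))^{-1}$.

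Next I would fix any $d$ with $\|d\|_\infty\leq 1$ and set $B:=A-\diag(d)$. Because only the diagonal is perturbed, the off-diagonal entries of $B$ coincide with those of $A$, while $|B_{ii}|\geq |A_{ii}|-|d_i|\geq |A_{ii}|-1$. Forming $\tilde B:=\diag(r)^{-1}B\diag(r)$, whose entries are $\tilde B_{ij}=r_i^{-1}r_j B_{ij}$, the hypothesis on $\alpha$ translates into
\begin{align*}
|\tilde B_{ii}|-\sum_{j\neq i} |\tilde B_{ij}|
\,=\, |B_{ii}| - \frac{1}{r_i}\sum_{j\neq i} r_j |A_{ij}|
\,\geq\, |A_{ii}|-1-\frac{1}{r_i}\sum_{j\neq i} r_j|A_{ij}|
\,\geq\, \alpha>0,
\end{align*}
so $\tilde B$ is strictly row diagonally dominant with dominance at least $\alpha$.

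Invoking Varah's bound \cite{varah} yields $\|\tilde B^{-1}\|_\infty\leq 1/\alpha$. Since $\tilde B^{-1}=\diag(r)^{-1}B^{-1}\diag(r)$, this is exactly $\VERT B^{-1}\VERT\leq 1/\alpha$. Because the estimate is uniform in $d$, the matrix $[A-I,A+I]$ is regular and taking the maximum over $\|d\|_\infty\leq 1$ in the defining formula \eqref{ONEr} gives $c_{\VERT\cdot\VERT}(A)\leq 1/\alpha$. There is no real obstacle here beyond bookkeeping; the only subtle point is recognizing that the scaled norm is precisely designed so that the assumption on $\alpha$ becomes ordinary strict diagonal dominance after the similarity $\diag(r)^{-1}\cdot\diag(r)$, making Varah's classical inequality immediately applicable.
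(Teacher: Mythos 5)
Your proof is correct and follows essentially the same route as the paper: both arguments come down to the fact that the weighted row diagonal dominance of $A-\diag(d)$ is at least $\alpha$ uniformly over $\|d\|_\infty\leq 1$, which bounds $\VERT (A-\diag(d))^{-1}\VERT$ by $1/\alpha$. The only cosmetic difference is that you package this via Varah's bound applied after the similarity $\diag(r)^{-1}(\cdot)\diag(r)$, whereas the paper re-derives the same inequality directly by lower-bounding $\min_{\VERT x\VERT=1}\VERT (A-\diag(d))x\VERT$ from the definition of the scaled norm.
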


\begin{proof}
First, we show that for a given $d$ with $\| d \|_\infty\leq 1$, we have the following inequality 
\begin{align}\label{gggg}
\min_{\VERT x \VERT=1}\VERT (A+\diag(d))x \VERT\geq \alpha.
\end{align}
Suppose that $\bar x\in\argmin_{\VERT x \VERT=1}\VERT (A+\diag(d))x \VERT$ and $ \bar x_k=r_k$. We have 
\begin{align*}
\VERT (A+\diag(d))\bar x \VERT& \geq |r_k^{-1}(A-\diag(d))_{k*}\bar x|\\
&\geq  |A_{kk}-d_k|-r_k^{-1}\sum_{j\neq k} |A_{kj}\bar x_j|\\\
&\geq |A_{kk}|-1-r_k^{-1}\sum_{j\neq k} \frac{r_j|\bar x_j|}{r_j}|A_{kj}|\\
&\geq |A_{kk}|-1-r_k^{-1}\sum_{j\neq k} |A_{kj}|r_j\geq \alpha.
\end{align*}
Consequently, interval matrix $[A-I, A+I]$ is regular. Similarly to the proof of Proposition~\ref{GFGF}, one can show that 
\begin{align*}
c_{\VERT\cdot\VERT}(A)^{-1}=\min_{\| d \|_\infty\leq 1, \VERT x \VERT=1}\VERT (A+\diag(d))x \VERT.
\end{align*}
The above equality and \eqref{gggg} imply $c_{\VERT\cdot\VERT}(A)\leq \frac{1}{\alpha}$, and the proof is complete. 
\end{proof}

\begin{corollary} 
If 
$
\alpha:=\min_{i=1, ..., n}\{|A_{ii}|-r_i(A)\}>1, 
$
then $c_{\infty}(A)\leq \frac{1}{\alpha-1}$. 
\end{corollary}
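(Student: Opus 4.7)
The plan is to obtain the corollary as a direct specialization of the preceding proposition by taking the diagonal scaling vector $r$ to be the all-ones vector $e$.

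First I would observe that the proposition immediately above is stated for an arbitrary positive vector $r$ and a scaled infinity norm $\VERT x\VERT := \|\diag(r)^{-1}x\|_\infty$. If we choose $r = e$, the diagonal scaling matrix becomes the identity, so $\VERT \cdot \VERT$ reduces to the ordinary $\infty$-norm on $\mathbb{R}^n$, and consequently $c_{\VERT\cdot\VERT}(A) = c_\infty(A)$.

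Next I would check what the hypothesis of the proposition becomes under this choice. With $r_i = 1$ for all $i$, the quantity $r_i^{-1}\sum_{j\neq i} r_j |A_{ij}|$ simplifies to $\sum_{j\neq i}|A_{ij}| = r_i(A)$. Hence the constant $\alpha$ appearing in the proposition equals $\min_i\{|A_{ii}| - 1 - r_i(A)\}$, which, using the corollary's notation, is exactly $\alpha - 1$. The assumption $\alpha > 1$ of the corollary is therefore precisely the positivity assumption required to invoke the proposition.

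Applying the proposition then yields $c_\infty(A) = c_{\VERT\cdot\VERT}(A) \leq \frac{1}{\alpha - 1}$, which is the claimed bound. There is essentially no obstacle here; the only thing to double-check is that the two $\alpha$'s in the corollary and the proposition differ by the constant $1$, so one must be careful not to confuse them when quoting the estimate.
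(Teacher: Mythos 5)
Your proposal is correct and matches the intended derivation: the corollary is exactly the preceding proposition specialized to $r=e$, under which the scaled norm becomes the ordinary $\infty$-norm and the proposition's constant equals your $\alpha-1$. You also rightly flag the only real pitfall, namely the clash between the two uses of $\alpha$ (and of the symbol $r_i$) in the proposition versus the corollary.
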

\begin{corollary} 
If 
$
\beta:=\min_{j=1, ..., n}\{|A_{jj}|-\cl_j(A)\}>1, 
$
then $c_{1}(A)\leq \frac{1}{\beta-1}$. 
\end{corollary}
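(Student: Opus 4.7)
The plan is to derive this corollary from the previous one (the $\infty$-norm version with row sums) via the duality relation between $1$-norm and $\infty$-norm condition numbers established earlier in the paper. Concretely, the earlier proposition gives $c_1(A^T) = c_\infty(A)$, which, applied with $A$ replaced by $A^T$, yields $c_1(A) = c_\infty(A^T)$. So it suffices to bound $c_\infty(A^T)$.

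To do this, I would set $B := A^T$ and translate the hypothesis into a statement about row sums of $B$: the diagonal entries match, $|B_{ii}| = |A_{ii}|$, while $r_i(B) = \sum_{j\neq i}|B_{ij}| = \sum_{j\neq i}|A_{ji}| = \cl_i(A)$. Hence $\min_i\{|B_{ii}| - r_i(B)\} = \beta > 1$, so the hypothesis of the preceding corollary holds for $B$, giving $c_\infty(B) \leq \frac{1}{\beta - 1}$.

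Combining the two displays finishes the proof: $c_1(A) = c_\infty(A^T) = c_\infty(B) \leq \frac{1}{\beta - 1}$.

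There is no real obstacle here: the only thing to be careful about is matching the indexing correctly so that the row sums of $A^T$ become the column sums of $A$, which is immediate from $(A^T)_{ij} = A_{ji}$. The argument is essentially a one-line reduction to the $\infty$-norm corollary via transposition.
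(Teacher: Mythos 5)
Your reduction is correct: the identity $c_1(A)=c_\infty(A^T)$ from part (ii) of the earlier proposition, combined with the observation that $r_i(A^T)=\cl_i(A)$, is exactly the route the paper intends (it states the corollary without proof, immediately after the $\infty$-norm version, with the transposition identity set up earlier for this purpose). No gaps; the indexing check you flag is the only point of care and you handle it correctly.
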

\section{Error bounds and a condition number of AVE related to linear complementarity problems}\label{sLCP}

The study of  AVE is inspired from the well-known linear complementarity problem (LCP) \cite{mangasarian2}. LCP provides a unified framework for many mathematical programs \cite{Cottle}.
In the section, we study error bounds for AVE obtained by transforming LCPs. Consider a general linear complementarity problem
\begin{align}\label{LCP}\tag{LCP}
Mx+q\geq 0, \ \  x\geq 0,  \ \  x^T(Mx+q)=0,
\end{align}
where $M\in\mathbb{R}^{n\times n}$ and $q\in\mathbb{R}^{n}$. Throughout the section, without loss of generality, we may assume that one is not an eigenvalue of $M$. So matrix $(M-I)$ is non-singular. This assumption is not restrictive, as one can rescale $M$ and $q$ in \eqref{LCP}. Problem \eqref{LCP} can be formulated as the following AVE,
\begin{align}\label{LCP-AVE}
(M+I)(M-I)^{-1}(x+q)=|x|;
\end{align}
see \cite{Mang}. The following proposition states the relationship between $M$ and $(M+I)(M-I)^{-1}$; see Theorem~2 in \cite{Rohnrump}.

\begin{proposition}
Let $M-I$ be non-singular. Matrix $M$ is a P-matrix if and only if  $[(M+I)(M-I)^{-1}-I, (M+I)(M-I)^{-1}+I]$ is regular. 
\end{proposition}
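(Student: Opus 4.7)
The plan is to convert the regularity of $[A-I,A+I]$, where $A:=(M+I)(M-I)^{-1}$, into a nonsingularity condition on the matrix pencil $EM+(I-E)$, and then invoke a classical P-matrix characterization.

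First, for any diagonal matrix $D=\diag(d)$ with $\|d\|_\infty\leq 1$, a direct computation (adding and subtracting $DM$) yields
\[
A-D \;=\; (M+I)(M-I)^{-1}-D \;=\; \bigl[(I-D)M+(I+D)\bigr](M-I)^{-1}.
\]
Since $M-I$ is invertible by hypothesis, $A-D$ is nonsingular iff $(I-D)M+(I+D)$ is. As the elements of $[A-I,A+I]$ are precisely the matrices $A-D$ with $D=\diag(d)$, $\|d\|_\infty\leq 1$, regularity of $[A-I,A+I]$ is equivalent to nonsingularity of $(I-D)M+(I+D)$ for every such $D$. Under the substitution $D=I-2E$, which bijects diagonal matrices with entries in $[-1,1]$ onto diagonal matrices with entries in $[0,1]$, the condition rewrites (after dividing out the factor of $2$) as: $EM+(I-E)$ is nonsingular for every diagonal $E$ with $0\leq E_{ii}\leq 1$.

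Second, I would invoke (and prove) the classical P-matrix characterization: $M$ is a P-matrix if and only if $EM+(I-E)$ is nonsingular for every such $E$. The required equivalent form of the P-matrix property is that there is no $x\neq 0$ with $x_i(Mx)_i\leq 0$ for all $i$. With this in hand, the forward direction reads: if $(EM+I-E)x=0$ with $x\neq 0$, then $e_i(Mx)_i+(1-e_i)x_i=0$ in every coordinate, and splitting into $e_i\in\{0,1\}$ and $e_i\in(0,1)$ shows $x_i(Mx)_i\leq 0$ for all $i$, a contradiction. Conversely, given $x\neq 0$ with $x_i(Mx)_i\leq 0$ for all $i$, choose $e_i\in[0,1]$ coordinate by coordinate so that $e_i(Mx)_i+(1-e_i)x_i=0$; this is solvable because in each coordinate $x_i$ and $(Mx)_i$ either have opposite signs or at least one vanishes, and then $(EM+I-E)x=0$ exhibits singularity.

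The first step is a routine factorization; the main obstacle is the second step, which reduces to a short sign-pattern case analysis on the pairs $(x_i,(Mx)_i)$. Chaining the two equivalences produces the stated proposition.
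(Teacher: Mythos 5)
Your argument is correct, and it is worth noting that the paper gives no proof of this proposition at all---it is imported as Theorem~2 of the cited Rohn--Rump reference---so what you have written is essentially a self-contained proof of that cited result. Both halves check out. The factorization $A-\diag(d)=\bigl[(I-D)M+(I+D)\bigr](M-I)^{-1}$ is a correct identity, and your implicit use of the fact that the elements of $[A-I,A+I]$ are exactly the diagonal perturbations $A-\diag(d)$ with $\|d\|_\infty\leq 1$ is justified because the radius matrix of this interval matrix is $I$, which vanishes off the diagonal. The substitution $D=I-2E$ then gives $(I-D)M+(I+D)=2\bigl(EM+(I-E)\bigr)$, so regularity of $[A-I,A+I]$ is equivalent to nonsingularity of $EM+(I-E)$ for all diagonal $E$ with entries in $[0,1]$; this is the Gale--Nikaido/Gabriel--Mor\'e characterization of P-matrices, and your derivation of it from the Fiedler--Pt\'ak sign-reversal criterion is sound: in the converse direction, when $x_i(Mx)_i<0$ the required weight is $e_i=x_i/\bigl(x_i-(Mx)_i\bigr)$, which lies in $(0,1)$ precisely because $x_i$ and $(Mx)_i$ have opposite signs, and the degenerate cases $x_i=0$ or $(Mx)_i=0$ are handled by $e_i=0$ or $e_i=1$. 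The one ingredient you take as a black box is the Fiedler--Pt\'ak theorem itself (no nonzero $x$ with $x_i(Mx)_i\leq 0$ for all $i$); that is classical and citing it is legitimate. Your route buys an elementary, verifiable proof via the Cayley-transform identity; the paper's route buys only brevity by deferring everything to the literature.
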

 
In addition to the error bounds introduced for some classes of matrices in the former section, in the following  results, we propose error bounds for absolute value equation \eqref{LCP-AVE} according to some properties of $M$. 

\begin{proposition}\label{PLC}
Let $M$ be an M-matrix with $\Diag(M)\leq e$ and $M-I$ is nonsingular. Then 
\begin{align*}
c((M+I)(M-I)^{-1})=\frac{1}{2}\|I-M^{-1}\|.
\end{align*}
\end{proposition}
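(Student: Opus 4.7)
The plan is to apply Kuttler's theorem (as in the proof of Proposition~\ref{PP}) to the negated interval matrix $[-A-I,-A+I]$, where $A:=(M+I)(M-I)^{-1}$. The reason for negating is that the endpoints of $[A-I,A+I]$ will turn out to be inverse non-positive rather than non-negative, so negating puts us in Kuttler's setting. Since $c(\cdot)$ is invariant under $A\mapsto -A$ and the change of variable $d\mapsto -d$, maximising $\|C^{-1}\|$ over $C\in[-A-I,-A+I]$ yields the same number as $c(A)$.

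The first step is to write the endpoints and their inverses in closed form. From $A=I+2(M-I)^{-1}$ one obtains $(A-I)^{-1}=\frac{1}{2}(M-I)$ and $(A+I)^{-1}=\frac{1}{2}(I-M^{-1})$, so the negated endpoints satisfy $(-A-I)^{-1}=\frac{1}{2}(M^{-1}-I)$ and $(-A+I)^{-1}=\frac{1}{2}(I-M)$. I then verify both are entrywise non-negative. Since $M$ is an M-matrix with $\Diag(M)\le e$, the off-diagonal entries are non-positive and the diagonal entries are at most $1$, hence $I-M\ge 0$. For the other direction, the identity $M^{-1}-I=M^{-1}(I-M)$ together with $M^{-1}\ge 0$ (M-matrix property) and $I-M\ge 0$ yields $M^{-1}\ge I$, i.e.\ $M^{-1}-I\ge 0$.

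With both endpoints of $[-A-I,-A+I]$ having non-negative inverses, Kuttler's theorem declares the interval regular (so $[A-I,A+I]$ is regular too and $c(A)$ is well-defined) and provides the entrywise sandwich $(-A+I)^{-1}\le C^{-1}\le(-A-I)^{-1}$ for every $C$ in the interval. For $C=-A+\diag(d)$ with $\|d\|_\infty\le 1$ one has $C^{-1}=-(A-\diag(d))^{-1}$, so $|(A-\diag(d))^{-1}|\le\frac{1}{2}(M^{-1}-I)$. Monotonicity of scaling $p$-norms on non-negative matrices then promotes this entrywise bound to $\|(A-\diag(d))^{-1}\|\le\frac{1}{2}\|M^{-1}-I\|=\frac{1}{2}\|I-M^{-1}\|$, and taking the supremum over $d$ gives $c(A)\le\frac{1}{2}\|I-M^{-1}\|$. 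The reverse inequality is free: plugging $d=-e$ into the definition of $c(A)$ yields $\|(A+I)^{-1}\|=\frac{1}{2}\|I-M^{-1}\|$ as a lower bound, giving equality.

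The only non-routine step is verifying the Kuttler hypothesis $M^{-1}\ge I$; this is precisely where the assumption $\Diag(M)\le e$ is essential — without it one could lose $I-M\ge 0$, and then one of the two negated endpoints might fail to be inverse non-negative, spoiling the sandwich that drives the whole argument.
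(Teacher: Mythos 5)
Your proof is correct and follows essentially the same route as the paper's: the identities $(A-I)^{-1}=\frac{1}{2}(M-I)$ and $(A+I)^{-1}=\frac{1}{2}(I-M^{-1})$, the sign verification, and Kuttler's theorem giving the entrywise sandwich $[A-I,A+I]^{-1}\subseteq\frac{1}{2}[I-M^{-1},M-I]$ from which the norm maximum is read off. The only (welcome) differences are cosmetic: you negate the interval to match the inverse-nonnegative form of Kuttler's theorem, and you spell out the final monotonicity and $d=-e$ attainment steps that the paper compresses into ``consequently.''
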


\begin{proof}
Since the off-diagonal elements of $M$ are non-positive and $M^{-1}\geq 0$, we have $\Diag(M^{-1})\geq e$. 
Putting $A=(M+I)(M-I)^{-1}$, we get
\begin{align*}
& A-I=((M+I)-(M-I))(M-I)^{-1}=2(M-I)^{-1},\\
& A+I=2M(M-I)^{-1}=2(I-M^{-1})^{-1}.
\end{align*}
Therefore, $(A-I)^{-1}=\frac{1}{2}(M-I)\leq 0$ and $(A+I)^{-1}=\frac{1}{2}(I-M^{-1})\leq 0$. Kuttler's theorem \cite{Kuttler} implies that $[A-I, A+I]$ is regular and 
 $[A-I, A+I]^{-1}\subseteq\frac{1}{2}[I-M^{-1},M-I]$, and consequently, $c(A)=\frac{1}{2}\|I-M^{-1}\|$.
\end{proof} 

It is worth noting that the assumption $\Diag(M)\leq e$ is not restrictive, since LCP$(M, q)$ is equivalent to LCP$(\lambda M, \lambda q)$ for $\lambda>0$. 
In the following, we investigate the case that $M$ is an H-matrix. Before we get to the theorem, which gives a bound in this case, we need to present a lemma first.

\begin{lemma}\label{LEm1}
If $M$ is an H-matrix with non-negative diagonals, then $M+I$ is an H-matrix.
\end{lemma}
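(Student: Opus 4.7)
The plan is to show directly that the comparison matrix of $M+I$ is an M-matrix, by relating it to the comparison matrix of $M$.

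First I would compute $\langle M+I\rangle$ entrywise. Since $M$ has non-negative diagonal, for each $i$ we have $(M+I)_{ii}=M_{ii}+1\geq 1>0$, so
\begin{align*}
\langle M+I\rangle_{ii}=|M_{ii}+1|=M_{ii}+1=|M_{ii}|+1=\langle M\rangle_{ii}+1.
\end{align*}
For the off-diagonal entries, $(M+I)_{ij}=M_{ij}$ for $i\neq j$, so $\langle M+I\rangle_{ij}=-|M_{ij}|=\langle M\rangle_{ij}$. Combining these two observations gives the clean identity
\begin{align*}
\langle M+I\rangle=\langle M\rangle+I.
\end{align*}

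Next, since $M$ is an H-matrix by assumption, $\langle M\rangle$ is an M-matrix. I would then invoke the standard fact, already used elsewhere in the paper (cf.\ \cite{Berman}), that M-matrices are preserved under addition of non-negative diagonal matrices; in particular, $\langle M\rangle+I$ is an M-matrix. Hence $\langle M+I\rangle$ is an M-matrix, which by definition means $M+I$ is an H-matrix.

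I do not anticipate any real obstacle: the only subtle point is that the identity $\langle M+I\rangle=\langle M\rangle+I$ genuinely requires the non-negativity of $\Diag(M)$, because otherwise $|M_{ii}+1|$ need not equal $|M_{ii}|+1$ (and in fact the identity can fail when $M_{ii}<0$, for example $M_{ii}=-1$). This is exactly where the hypothesis $\Diag(M)\geq 0$ is used.
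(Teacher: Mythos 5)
Your proof is correct and follows essentially the same route as the paper: both hinge on the identity $\langle M+I\rangle=\langle M\rangle+I$ (valid precisely because $\Diag(M)\geq 0$) and then on the fact that $\langle M\rangle+I$ is again an M-matrix. The only cosmetic difference is that the paper justifies this last step via the characterization that some $x>0$ satisfies $\langle M\rangle x>0$ (hence $(\langle M\rangle+I)x>0$), whereas you cite the closure of M-matrices under addition of non-negative diagonal matrices; these are interchangeable standard facts.
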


\begin{proof}
By $I_{27}$ of Theorem 2.3 in Chapter 6 of \cite{Berman}, there exist $x>0$ such that $\langle  M\rangle x>0$, which implies $(\langle  M\rangle+I)x>0$. 
By the assumption, $\langle  M+I\rangle=\langle  M\rangle+I$. By using the aforementioned theorem,  $M+I$ should be an H-matrix.
\end{proof}

\begin{theorem}\label{T33}
Let $M-I$ be nonsingular and let $M$ be an H-matrix with $0\leq \Diag(M)\leq e$. Then
\begin{align*}
c((M+I)(M-I)^{-1})\leq\frac{1}{2}\|\langle  M\rangle^{-1}-I\|.
\end{align*}
\end{theorem}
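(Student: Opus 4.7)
The plan is to follow the template of Proposition \ref{PLC}, but since $M$ is merely an H-matrix rather than an M-matrix, replace the direct Kuttler argument by the standard H-matrix bound $|N^{-1}|\leq\langle N\rangle^{-1}$ together with a monotonicity comparison between M-matrices. Set $A:=(M+I)(M-I)^{-1}$, fix $d$ with $\|d\|_\infty\leq 1$, and write $D:=\diag(d)$. Multiplying the identity $(A-D)(M-I)=(M+I)-D(M-I)=(I-D)M+(I+D)$ on the right by the appropriate inverse, I would derive
\begin{align*}
(A-D)^{-1}=(M-I)N(D)^{-1}, \qquad N(D):=(I-D)M+(I+D).
\end{align*}
Using $0\leq\Diag(M)\leq e$ and $|d|\leq e$, an entrywise check (each diagonal entry $(1-d_i)M_{ii}+(1+d_i)$ is non-negative) gives $\langle N(D)\rangle=(I-D)\langle M\rangle+(I+D)$ and $|M-I|=I-\langle M\rangle$, and both right-hand sides are non-negative matrices.

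Next I would verify that $N(D)$ is an H-matrix for every admissible $D$. Since $M$ is an H-matrix, there exists $x>0$ with $\langle M\rangle x>0$ (standard characterization of M-matrices from \cite{Berman}, used already in Lemma~\ref{LEm1}). For this $x$, the $i$-th entry of $\langle N(D)\rangle x$ equals $(1-d_i)(\langle M\rangle x)_i+(1+d_i)x_i$; because at least one of the coefficients $1\pm d_i$ is strictly positive when $|d_i|\leq 1$, this entry is positive, so $\langle N(D)\rangle x>0$. Combined with the Z-matrix structure, this shows $\langle N(D)\rangle$ is an M-matrix, hence $N(D)$ is an H-matrix and $|N(D)^{-1}|\leq\langle N(D)\rangle^{-1}$.

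The last step is to compare the two M-matrices $2\langle M\rangle$ and $\langle N(D)\rangle$. Their difference is
\begin{align*}
\langle N(D)\rangle-2\langle M\rangle=(I-D)\langle M\rangle+(I+D)-2\langle M\rangle=(I+D)(I-\langle M\rangle)\geq 0,
\end{align*}
so by the standard monotonicity of M-matrix inverses we obtain $\langle N(D)\rangle^{-1}\leq\frac{1}{2}\langle M\rangle^{-1}$. Chaining the estimates yields
\begin{align*}
|(A-D)^{-1}|\leq|M-I|\cdot|N(D)^{-1}|\leq (I-\langle M\rangle)\langle N(D)\rangle^{-1}\leq \frac{1}{2}(I-\langle M\rangle)\langle M\rangle^{-1}=\frac{1}{2}(\langle M\rangle^{-1}-I).
\end{align*}
Since every scaling $p$-norm is monotone, taking norms on both sides and then the maximum over $\|d\|_\infty\leq 1$ gives $c(A)\leq\frac{1}{2}\|\langle M\rangle^{-1}-I\|$. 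The main technical hurdle will be the joint verification that $\langle N(D)\rangle$ is an M-matrix and that it dominates $2\langle M\rangle$ entrywise for every $D$; once these two facts are in place, the rest is a straightforward chain of non-negative matrix inequalities together with monotonicity of the norm.
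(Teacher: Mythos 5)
Your proof is correct, but it takes a genuinely different route from the paper's. The paper factors $(A-\diag(d))^{-1}=(I-(M-I)(M+I)^{-1}\diag(d))^{-1}(M-I)(M+I)^{-1}$, bounds the spectral radius of $(M-I)(M+I)^{-1}\diag(d)$ by that of $(I-\langle M\rangle)(\langle M\rangle+I)^{-1}<1$, expands in a Neumann series, and sums the resulting geometric series to reach $\frac{1}{2}(\langle M\rangle^{-1}-I)$. You instead use the factorization $(A-D)^{-1}=(M-I)N(D)^{-1}$ with $N(D)=(I-D)M+(I+D)$, verify directly (via the positive-vector characterization already invoked in Lemma~\ref{LEm1}) that $N(D)$ is an H-matrix, apply the comparison bound $|N(D)^{-1}|\leq\langle N(D)\rangle^{-1}$, and finish with inverse-monotonicity of M-matrices from $\langle N(D)\rangle\geq 2\langle M\rangle$. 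Both arguments rest on the same Ostrowski-type inequality $|(\cdot)^{-1}|\leq\langle\cdot\rangle^{-1}$ (the paper applies it to $M+I$, you to $N(D)$), but yours replaces the spectral-radius argument and the infinite series by a finite chain of entrywise inequalities, which is arguably more elementary and makes the factor $\frac{1}{2}$ appear transparently from $\langle N(D)\rangle-2\langle M\rangle=(I+D)(I-\langle M\rangle)\geq 0$; the paper's Neumann-series route has the advantage of paralleling the later derivation of the bound \eqref{CHen} in Section~\ref{sLCP}, which it obtains as a by-product. One cosmetic slip: you assert that \emph{both} right-hand sides $(I-D)\langle M\rangle+(I+D)$ and $I-\langle M\rangle$ are non-negative, but the former is a Z-matrix with non-positive off-diagonal entries; only $I-\langle M\rangle\geq 0$ is true and only that fact (together with $I+D\geq 0$ and the non-negativity of the M-matrix inverses) is actually used, so the argument is unaffected.
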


\begin{proof}
Consider vector $d\in\mathbb{R}^n$ with $\|d\|_\infty\leq 1$. We have
\begin{align*}
|(M-I)(M+I)^{-1}\diag(d)|&\leq  |(M-I)(M+I)^{-1}|\\
& \leq   |M-I| |(M+I)^{-1}|\\
& \leq  (I-\langle  M\rangle)(\langle  M\rangle+I)^{-1} 
\end{align*}
where the last inequality follows from  $|M-I|\leq I-\langle  M\rangle$,  $ |(M+I)^{-1}|\leq (\langle  M+I\rangle)^{-1}$; see Theorem~3.7.5 in \cite{Arnold} and  Lemma~\ref{LEm1}. Thus, $\rho((M-I)(M+I)^{-1}\diag(d))\leq \rho((I-\langle  M\rangle)(\langle  M\rangle+I)^{-1})$. Since $\langle  M\rangle$ is an M-matrix and $\rho(BC)=\rho(CB)$, we have $\rho((I-\langle  M\rangle)(\langle  M\rangle+I)^{-1})<1$; see $G_{22}$ of Theorem 2.3 in Chapter 6 of \cite{Berman}. Hence, $\rho((M-I)(M+I)^{-1}\diag(d))<1$.

Let $\hat A\in [(M+I)(M-I)^{-1}-I, (M+I)(M-I)^{-1}+I]$. So $\hat A=(M+I)(M-I)^{-1}-\diag(d)$ for some $d$ with $\|d\|_\infty\leq 1$. Hence,
\begin{multline*}
((M+I)(M-I)^{-1}-\diag(d))^{-1}\\
=(I-(M-I)(M+I)^{-1}\diag(d))^{-1}(M-I)(M+I)^{-1}.
\end{multline*}
By applying Neumann series and the obtained results, we have 
\begin{align*}
|((M+I)(M-I)^{-1}&-\diag(d))^{-1}|\\
 &\leq \left|\sum_{i=0}^\infty ((M-I)(M+I)^{-1}\diag(d))^i\right|
   |(M-I)(M+I)^{-1}|,\\
&\leq \sum_{i=1}^\infty |(M-I)(M+I)^{-1}|^i ,\\
&\leq \sum_{i=1}^\infty ((I-\langle  M\rangle)(\langle  M\rangle+I)^{-1})^i,\\
&= (I-(I-\langle  M\rangle)(\langle  M\rangle+I)^{-1})^{-1}(I-\langle  M\rangle)(\langle  M\rangle+I)^{-1}\\
&= \frac{1}{2}(\langle  M\rangle^{-1}-I),
\end{align*}
where the last equality obtained by using the relations $(I-A)^{-1}A=(A^{-1}-I)^{-1}$ and $((I+\langle  M\rangle)(I-\langle  M\rangle)^{-1}-I)^{-1}=(2\langle  M\rangle(I-\langle  M\rangle)^{-1})^{-1}=\frac{1}{2}(\langle  M\rangle^{-1}-I)$. 
Therefore, $\|\hat A^{-1}\|\leq \frac{1}{2}\|\langle  M\rangle^{-1}-I\|$, and the proof is complete.
\end{proof} 

In the rest of this section, by using the obtained result, we present new error bounds for linear complementarity problems. Many papers have devoted to the  error bounds for the LCP(M, q); see \cite{Chen, Chen2, Cottle, Garcia, Pang}. It is easily seen that $\hat x$ is a solution of \eqref{LCP} if and only if $\hat x$ solves 
\begin{align*}
\theta(x):=\min(Mx+q, x)=0.
\end{align*}
The function $\theta(x)$ is called  the natural residual of \eqref{LCP}. As mentioned earlier, \eqref{LCP} has a unique solution for each $q$ if and only if $M$ is a P-matrix. For $M$ being a P-matrix,  Chen and Xiang~\cite{Chen} proposed the following error bound
\begin{align*}
\|x-x^\star\|\leq \max_{0\leq D\leq I} \|(I-D+DM)^{-1}\|\cdot \|\theta(x)\|,
\end{align*}
where $x^\star$ is the solution of \eqref{LCP} and $x\in\R^n$ arbitrary.  
By introducing new variable $d$ with $\diag(d)=2D-I$, we have 
\begin{align}\label{Chh}
\nonumber \max_{0\leq D\leq I} \|(I-D+& DM)^{-1}\|
 =\max_{\|d\|_\infty\leq 1}
   \|(I-\textstyle\frac{1}{2}(\diag(d)+I)+\textstyle\frac{1}{2}(\diag(d)+I)M)^{-1}\|\\
&= 2\max_{\|d\|_\infty\leq 1} \|(I-M)^{-1}((I+M)(I-M)^{-1}-\diag(d))^{-1}\|.
\end{align}
Because $c(A)=c(-A)$, we have 
\begin{align*}
\max_{0\leq D\leq I} \|(I-D+DM)^{-1}\|\leq 2c((I+M)(M-I)^{-1})\|(I-M)^{-1}\|.
\end{align*}
Therefore, the given results in this paper can be exploited for providing an upper bound for this maximization. For instance, Chen and Xiang, see Theorem~2.2 in~\cite{Chen}, proved that when $M$ is an M-matrix, then
\begin{align*}
\max_{0\leq D\leq I}\, \|(I-D+DM)^{-1}\|_1=\max_{v\in V}\, f(v),
\end{align*}
where $f(v)=\max_{1\leq i \leq n}(e+v-M^Tv)_i$ and $V=\{v: M^Tv\leq e, v\geq 0\}$. As seen, $f$ is a piece-wise linear convex function. However, maximization of a convex function, in general, is an intractable problem. In this case, one needs to solve $n$ linear programs. In the next proposition, we give an upper bound for the optimal value for $\infty$-norm. 

\begin{proposition}
Let $M$ be an M-matrix with $\Diag(M)\leq e$. Then
\begin{align*}
\max_{0\leq D\leq I} \|(I-D+DM)^{-1}\|_\infty = \|\hat{B}\|_\infty,
\end{align*}
where for $i, j=1, ..., n$
\begin{align*}
\unum{B}_{ij}=\sum_{k=1}^n \min\{(I-M)^{-1}_{ik}(I-M)_{kj}, (I-M)^{-1}_{ik}(M^{-1}-I)_{kj} \},\\
\onum{B}_{ij}=\sum_{k=1}^n \max\{(I-M)^{-1}_{ik}(I-M)_{kj}, (I-M)^{-1}_{ik}(M^{-1}-I)_{kj} \},
\end{align*} 
and $\hat{B}=\max(| \unum{B}|, | \onum{B}|)$.
\end{proposition}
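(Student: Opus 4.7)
The plan is to use Eq.~\eqref{Chh} to express the maximum in terms of $(\tilde A-\diag(d))^{-1}$ with $\tilde A:=(I+M)(I-M)^{-1}$, then apply Kuttler's theorem as in Proposition~\ref{PLC} to bound this inverse entrywise, and finally invoke monotonicity of the $\infty$-norm. The identity behind \eqref{Chh} is $(I-D+DM)^{-1}=2(I-M)^{-1}(\tilde A-\diag(d))^{-1}$ for $d=2\Diag(D)-e$. Repeating the Kuttler-style computation from the proof of Proposition~\ref{PLC} (noting $\tilde A=-A$), one obtains $(\tilde A-I)^{-1}=\tfrac{1}{2}(M^{-1}-I)\ge 0$ and $(\tilde A+I)^{-1}=\tfrac{1}{2}(I-M)\ge 0$; the non-negativities use $\Diag(M)\le e$ together with the M-matrix property of $M$, which imply $I-M\ge 0$ and $M^{-1}\ge I$ entrywise. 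Kuttler's theorem then gives $[\tilde A-I,\tilde A+I]^{-1}=\tfrac{1}{2}[I-M,M^{-1}-I]$, so
\[
(\tilde A-\diag(d))^{-1}_{kj}\in\tfrac{1}{2}[(I-M)_{kj},(M^{-1}-I)_{kj}]
\]
entrywise for every $d$ with $\|d\|_\infty\le 1$.

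Setting $P:=(I-M)^{-1}$ and expanding the product entrywise,
\[
(I-D+DM)^{-1}_{ij}=\sum_{k=1}^{n}2P_{ik}\,(\tilde A-\diag(d))^{-1}_{kj}.
\]
By interval arithmetic (which accounts for the sign of $P_{ik}$, and which need not be uniform), each summand lies in $[\min\{P_{ik}(I-M)_{kj},P_{ik}(M^{-1}-I)_{kj}\},\max\{\ldots\}]$. Summing over $k$ places $(I-D+DM)^{-1}_{ij}$ in $[\unum{B}_{ij},\onum{B}_{ij}]$, so $|(I-D+DM)^{-1}|\le\hat B$ entrywise, and monotonicity of $\|\cdot\|_\infty$ yields $\|(I-D+DM)^{-1}\|_\infty\le\|\hat B\|_\infty$.

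The main obstacle will be the reverse inequality. Since $\{(\tilde A-\diag(d))^{-1}:\|d\|_\infty\le 1\}$ is only the diagonal-perturbation slice of the full Kuttler interval $\tfrac{1}{2}[I-M,M^{-1}-I]$, a priori $\max_D\|(I-D+DM)^{-1}\|_\infty$ could be strictly smaller than $\|\hat B\|_\infty$. To establish $\ge$, one would fix a row $i^\star$ realizing $\|\hat B\|_\infty$, invoke Proposition~\ref{Pr1} to restrict to the vertices $|d|=e$, and argue that the signs of the coordinates $d_k$ can be tuned so that $\sum_{k}P_{i^\star k}(\tilde A-\diag(d))^{-1}_{kj}$ lands at the appropriate endpoint of $[\unum{B}_{i^\star j},\onum{B}_{i^\star j}]$ simultaneously for every $j$; verifying that such a single vertex $d$ exists is the delicate point of the argument.
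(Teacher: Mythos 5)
Your upper-bound argument is exactly the paper's: rewrite $(I-D+DM)^{-1}=2(I-M)^{-1}\bigl((I+M)(I-M)^{-1}-\diag(d)\bigr)^{-1}$ via \eqref{Chh}, enclose the inverse factor in $\frac{1}{2}[I-M,M^{-1}-I]$ by the Kuttler computation from Proposition~\ref{PLC}, and push the enclosure through the product entrywise to get $|(I-D+DM)^{-1}|\leq\hat{B}$ and hence $\max_D\|(I-D+DM)^{-1}\|_\infty\leq\|\hat{B}\|_\infty$. This half is complete and correct.

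The gap is the reverse inequality, which you explicitly leave open, so as written your proposal does not prove the stated equality. You should know, however, that the difficulty you flag is precisely the step the paper's own proof does not resolve: the paper takes the row $i$ attaining $\|\hat{B}\|_\infty$ and exhibits $\check{B}\in\{(I-M)^{-1}X : I-M\leq X\leq M^{-1}-I\}$ with $|\check{B}_{i\ast}|=\hat{B}_{i\ast}$ (possible because, for fixed $i$, the entries $X_{\ast j}$ can be chosen independently column by column to hit the relevant endpoint of $[\unum{B}_{ij},\onum{B}_{ij}]$), and then declares equality. But, as you observe, the feasible set $\{(I-D+DM)^{-1}: 0\leq D\leq I\}$ corresponds only to the $n$-parameter family $X=2\bigl((I+M)(I-M)^{-1}-\diag(d)\bigr)^{-1}$, an $n$-dimensional slice of the $n^2$-dimensional box $[I-M,M^{-1}-I]$, and nothing shows that the extremal $\check{B}$ lies on that slice. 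So your argument establishes the inequality $\leq$ by the same route as the paper, and your scepticism about attainment identifies a real weakness in the paper's proof rather than a defect peculiar to your own; completing the proposition as stated would require the additional sign-tuning argument you sketch (restricting to vertices via Proposition~\ref{Pr1} and showing a single $d$ works for all columns simultaneously), and it is not evident that this can be done.
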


\begin{proof}
Similarly to the proof of Proposition \ref{PLC}, if $\Diag(M)\leq e$, we have $[(I+M)(I-M)^{-1}-I, (I+M)(I-M)^{-1}+I]^{-1}=\frac{1}{2}[I-M, M^{-1}-I]$. Therefore, by $\eqref{Chh}$
\begin{align*}
\max_{0\leq D\leq I}\, \|(I-D+DM)^{-1}\|_\infty
\leq \max_{I-M\leq X \leq M^{-1}-I} \|(I-M)^{-1} X\|_\infty.
\end{align*}
Furthermore, $\{ (I-M)^{-1} X:  I-M\leq X \leq M^{-1}-I \}\subseteq [\unum{B}, \onum{B}]$. 
Hence, 
\begin{align*}
\max_{0\leq D\leq I} \|(I-D+DM)^{-1}\|_\infty\leq \|\hat{B}\|_\infty.
\end{align*}
On the other hand, suppose that $\|\hat{B}\|_\infty=\|\hat{B}_{i\ast}\|_\infty$. There exist $\check{B}\in \{ (I-M)^{-1} X:  I-M\leq X \leq M^{-1}-I \}$ such that
 $| \check{B}_{i\ast}|=\hat{B}_{i\ast}$, which implies the above inequality holds as equality, and the proof will be complete. 
\end{proof} 

For $M$ being an H-matrix with $0\leq \Diag(M)\leq e$, similarly to the proof of Theorem~\ref{T33}, one can show that for $d$ with $\|d\|_\infty\leq 1$,
\begin{align*}
|(I-M)^{-1}((I+M)&(I-M)^{-1}-\diag(d))^{-1}|\\
&= |((I+M)-\diag(d)(I-M))^{-1}|,\\
&=\left|(I+M)^{-1}\sum_{i=0}^\infty (\diag(d)(I-M)(M+I)^{-1})^i\right|,\\
&\leq (\langle  M\rangle+I)^{-1}
 \sum_{i=0}^\infty ((I-\langle  M\rangle)(\langle  M\rangle+I)^{-1})^i,\\
&= (\langle  M\rangle+I)^{-1}(I-(I-\langle  M\rangle)(\langle  M\rangle+I)^{-1})^{-1}
 =\frac{1}{2}\langle  M\rangle^{-1}.
\end{align*}
Therefore, by $\eqref{Chh}$, we get 
\begin{align}\label{CHen}
\max_{0\leq D\leq I} \|(I-D+DM)^{-1}\|
\leq \langle  M\rangle^{-1},
\end{align}
 which is a well-known bound; see Theorem 2.1 in \cite{Chen}. Here, we obtain inequlity \eqref{CHen} with a different method as a by-product of our analysis.

\section{Relative condition number of AVE}\label{S.RC}

We introduce a relative condition number as follows
\begin{align*}
c^*(A)
:=\max_{\|d\|_\infty\leq 1}\|(A-\diag(d))^{-1}\| \cdot \max_{\|d\|_\infty\leq 1}\|A-\diag(d)\|,
\end{align*}
which is equal to $c(A) \max_{\|d\|_\infty\leq 1}\|A-\diag(d)\|$. The meaning of the relative condition number follows from the bounds presented in the proposition below. They extend the  bounds known for the error of standard linear systems of equations~\cite{higham}.

\begin{proposition}
 If the interval matrix $[A-I, A+I]$ is regular and $b\neq 0$, then for each $x\in \mathbb{R}^n$
\begin{align*}
 c^*(A)^{-1} \frac{\|Ax-b-|x|\|}{\|b\|}
  \leq \frac{\|x- x^\star\|}{\|x^\star\|}
  \leq c^*(A) \frac{\|Ax-b-|x|\|}{\|b\|}.
\end{align*}
\end{proposition}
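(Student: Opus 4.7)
The plan is to derive both bounds from two simple ingredients: (i) the residual identity already extracted in the proof of Theorem~\ref{TB}, namely that $Ax-b-|x|=\hat A(x-x^\star)$ for some $\hat A\in[A-I,A+I]$, and (ii) the analogous identity for $b$ itself. Writing $M:=\max_{\|d\|_\infty\leq 1}\|A-\diag(d)\|$, we have $c^*(A)=c(A)\cdot M$, so everything will reduce to tracking these two quantities.

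For ingredient (ii), observe that for any $y\in\mathbb{R}^n$ one can write $|y|=\diag(d_y)\,y$ with some $d_y\in[-e,e]$ (take $d_y=\sgn(y)$ componentwise, arbitrary for zero entries). Applying this to $y=x^\star$ and using $Ax^\star-|x^\star|=b$ gives
\begin{equation*}
b=(A-\diag(d^\star))\,x^\star
\quad\Longrightarrow\quad
x^\star=(A-\diag(d^\star))^{-1}b,
\end{equation*}
where $\|d^\star\|_\infty\leq 1$ and $A-\diag(d^\star)$ is invertible by regularity of $[A-I,A+I]$. From the first equality we read off $\|b\|\leq M\,\|x^\star\|$, and from the second $\|x^\star\|\leq c(A)\,\|b\|$. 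In particular $x^\star\neq 0$ since $b\neq 0$, so dividing by $\|x^\star\|$ is legitimate.

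For the upper bound, Theorem~\ref{TB} already gives $\|x-x^\star\|\leq c(A)\,\|Ax-b-|x|\|$. Dividing by $\|x^\star\|$ and applying the inequality $\|b\|\leq M\|x^\star\|$, i.e.\ $1/\|x^\star\|\leq M/\|b\|$, yields
\begin{equation*}
\frac{\|x-x^\star\|}{\|x^\star\|}
 \leq c(A)\,M\,\frac{\|Ax-b-|x|\|}{\|b\|}
 = c^*(A)\,\frac{\|Ax-b-|x|\|}{\|b\|}.
\end{equation*}
For the lower bound, the mean value argument in the proof of Theorem~\ref{TB} provides $Ax-b-|x|=\hat A(x-x^\star)$ for some $\hat A\in[A-I,A+I]$, hence
\begin{equation*}
\|Ax-b-|x|\|\leq \|\hat A\|\,\|x-x^\star\|\leq M\,\|x-x^\star\|.
\end{equation*}
Combining this with $\|x^\star\|\leq c(A)\|b\|$ gives
\begin{equation*}
\frac{\|Ax-b-|x|\|}{\|b\|}
 \leq M\,\frac{\|x-x^\star\|}{\|b\|}
 \leq M\,c(A)\,\frac{\|x-x^\star\|}{\|x^\star\|}
 = c^*(A)\,\frac{\|x-x^\star\|}{\|x^\star\|},
\end{equation*}
which is the lower bound after rearrangement.

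There is no real obstacle; the only point requiring a small argument is the representation $|x^\star|=\diag(d^\star)x^\star$ with $\|d^\star\|_\infty\leq1$, which lets $b$ be absorbed into the family $\{A-\diag(d):\|d\|_\infty\leq1\}$ and thereby couples $\|b\|$ to both $M$ and $c(A)$.
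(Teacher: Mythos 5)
Your proposal is correct and follows essentially the same route as the paper: both rely on the identity $b=(A-\diag(\sgn(x^\star)))x^\star$ to couple $\|b\|$ and $\|x^\star\|$ in both directions, use Theorem~\ref{TB} for the upper bound, and use the residual identity $Ax-b-|x|=\hat A(x-x^\star)$ from its proof for the lower bound. Your write-up is merely slightly more explicit in replacing $\|A-\diag(s^\star)\|$ by $M$ and $\|(A-\diag(s^\star))^{-1}\|$ by $c(A)$, which the paper leaves implicit.
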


\begin{proof}
Since $b\neq 0$, we have $x^\star\neq 0$. First, we show the upper bound.
Denote $s^\star:=\sgn(x^\star)$, From $Ax^\star-b=|x^\star|=\diag(s^\star)x^\star$  we derive $(A-\diag(s^\star))x^\star=b$, from which $\|A-\diag(s^\star)\|\cdot\|x^\star\|\geq\|b\|$. Now, we have by Theorem~\ref{TB}
\begin{align*}
\|x- x^\star\|
 \leq c(A)\|Ax-b-|x|\|
 \leq c(A) \|Ax-b-|x|\|\frac{\|A-\diag(s^\star)\|\cdot\|x^\star\|}{\|b\|},
\end{align*}
from which the bound follows.

Now, we establish the lower bound.
From the proof of Theorem~\ref{TB} we know that there exist some $\hat A\in [A-I, A+I]$ such that $Ax-b-|x|=\hat{A}(x-x^\star)$. Hence
\begin{align*}
\|Ax-b-|x|\|
&= \|\hat{A}(x-x^\star)\|
 \leq \|\hat{A}\|\cdot\|x-x^\star\|\\
&\leq \|\hat{A}\|\cdot\|x-x^\star\|\frac{\|(A-\diag(s^\star))^{-1}\|\cdot\|b\|}{\|x^\star\|},
\end{align*}
from which the statement follows.
\end{proof} 

In order to compute $c^*(A)$ we have to determine $c(A)$ and $\max_{\|d\|_\infty\leq 1}\|A-\diag(d)\|$. The former is discussed in detail in the previous sections, so we focus on the latter now. Recall that a norm is absolute if $\|A\|=\||A|\|$, and it is monotone if $|A|\leq|B|$ implies $\|A\|\leq\|B\|$. For example, 1-norm, $\infty$-norm, Frobenius norm or max norm are both absolute and monotone.

\begin{proposition}
For any absolute and monotone matrix norm 
\begin{align*}
 \max_{\|d\|_\infty\leq 1}\|A-\diag(d)\|=\||A|+I_n\|.
\end{align*}
\end{proposition}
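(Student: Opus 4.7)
The plan is to prove the equality by establishing the upper bound via monotonicity and then exhibiting a specific $d$ that achieves it.

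For the upper bound, I would observe that for every $d$ with $\|d\|_\infty\leq 1$, the entrywise triangle inequality gives
\begin{align*}
|A-\diag(d)|\leq |A|+|\diag(d)|\leq |A|+I_n.
\end{align*}
Applying first the absolute property and then the monotone property of the norm yields
\begin{align*}
\|A-\diag(d)\| = \||A-\diag(d)|\| \leq \||A|+I_n\|,
\end{align*}
so taking the maximum over $d$ produces $\max_{\|d\|_\infty\leq 1}\|A-\diag(d)\|\leq \||A|+I_n\|$.

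For the reverse inequality, I would exhibit a specific choice of $d$ attaining the bound. Define $d_i:=-\sgn(A_{ii})$ when $A_{ii}\neq 0$, and $d_i:=1$ when $A_{ii}=0$; clearly $\|d\|_\infty\leq 1$. With this choice, $|A_{ii}-d_i|=|A_{ii}|+1$ on the diagonal, while off-diagonal entries are unchanged, so $|A-\diag(d)|=|A|+I_n$ entrywise. The absolute property then gives $\|A-\diag(d)\|=\||A|+I_n\|$, producing equality.

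There is no substantive obstacle here; the only subtlety is making sure that both the absolute and the monotone hypotheses are used (monotonicity for the upper bound, absoluteness to convert $|A-\diag(d)|$-norm back into $\|A-\diag(d)\|$, and also to conclude that the constructed $d$ hits the bound). The argument is elementary and self-contained once the two hypotheses on the norm are in place.
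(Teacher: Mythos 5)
Your proof is correct and follows essentially the same route as the paper: the upper bound via the entrywise inequality $|A-\diag(d)|\leq|A|+I$ combined with absoluteness and monotonicity, and attainment at a suitable sign choice of $d$. The only difference is that the paper merely asserts that equality is attained for some $d$ with $\|d\|_\infty=1$, whereas you write out the explicit choice $d_i=-\sgn(A_{ii})$ (with $d_i=1$ when $A_{ii}=0$) and verify it, which is a welcome but not substantively different elaboration.
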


\begin{proof}
We have 
\begin{align*}
\max_{\|d\|_\infty\leq 1}\|A-\diag(d)\|
\leq \max_{\|d\|_\infty\leq 1}\||A|+|\diag(d)|\|
=\||A|+I\|,
\end{align*}
and equation is attained for certain $d$ with $\|d\|_\infty=1$.
\end{proof}

\begin{proposition}
For spectral norm we have
\begin{align*}
 \max_{\|d\|_\infty\leq 1}\|A-\diag(d)\|_2 \leq \|A\|_2+1.
\end{align*}
Moreover, It holds as an equality when $A$ is symmetric.
\end{proposition}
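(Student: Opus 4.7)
The plan is to split the statement into the two claims and handle them separately, since the inequality is an easy triangle-inequality computation and the equality case is a short spectral argument.

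For the general upper bound, I would just invoke the triangle inequality for the induced 2-norm: for any $d$ with $\|d\|_\infty\le 1$,
\begin{align*}
\|A-\diag(d)\|_2 \le \|A\|_2 + \|\diag(d)\|_2 = \|A\|_2 + \|d\|_\infty \le \|A\|_2+1,
\end{align*}
using the well-known fact that the spectral norm of a diagonal matrix equals the $\infty$-norm of its diagonal. Taking the maximum over $d$ gives the stated bound.

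For the equality case with $A$ symmetric, I would exhibit a specific $d\in\{-1,+1\}^n$ (in fact, of the form $d=\pm e$) that achieves the bound. Since $A$ is symmetric, $\|A\|_2=\max(|\lambda_{\max}(A)|,|\lambda_{\min}(A)|)=:\rho$. Split into two cases:
\begin{itemize}
\item If $\rho=\lambda_{\max}(A)\ge 0$, choose $d=-e$. Then $A-\diag(d)=A+I$ is symmetric with spectrum $\lambda(A)+1$, so $\lambda_{\max}(A+I)=\lambda_{\max}(A)+1=\rho+1>0$, and $|\lambda_{\min}(A)+1|\le\rho+1$. Hence $\|A+I\|_2=\rho+1$.
\item If $\rho=-\lambda_{\min}(A)> 0$, choose $d=e$. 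Then $A-\diag(d)=A-I$ has spectrum $\lambda(A)-1$, so $|\lambda_{\min}(A)-1|=\rho+1$ dominates $|\lambda_{\max}(A)-1|\le\rho+1$, giving $\|A-I\|_2=\rho+1$.
\end{itemize}
In either case there exists $d$ with $\|d\|_\infty=1$ and $\|A-\diag(d)\|_2=\|A\|_2+1$, which combined with the upper bound from the triangle inequality yields equality.

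There is no real obstacle here; the only mild subtlety is making sure the case analysis for equality correctly identifies which sign of the shift pushes an extreme eigenvalue further from the origin. Everything else is standard use of $\|\diag(d)\|_2=\|d\|_\infty$ and the spectral characterization of $\|\cdot\|_2$ for symmetric matrices.
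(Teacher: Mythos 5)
Your proof is correct and follows essentially the same route as the paper: the triangle inequality together with $\|\diag(d)\|_2=\|d\|_\infty$ for the upper bound, and attainment at some $d$ with $\|d\|_\infty=1$ in the symmetric case. In fact your case analysis choosing $d=\pm e$ according to which extreme eigenvalue realizes $\|A\|_2$ supplies the detail that the paper's proof only asserts, and it checks out.
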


\begin{proof}
We have 
$\|A-\diag(d)\|_2\leq \|A\|_2+\|\diag(d)\|_2\leq \|A\|_2+1$.
For symmetric $A$, equality is attained for some $d$ with $\|d\|_\infty=1$.
\end{proof}

 \textbf{Conclusion. }In this paper, we studied error bounds for absolute value equations. We suggested some formulas for the computation of error bounds for some classes of matrices. The investigation of  other classes of matrices may be of interest for further research. The proposed formulas can  be employed not only for absolute value equations obtained by transforming  linear complementarity problems, but also for linear complementarity problems. In addition, We showed that , in general, the computation of error bounds, except for 2-norm,  for a general matrix is an NP-hard problem, and it remains an open problem for 2-norm.  

\section*{Acknowledgments}
The authors were supported by the Czech Science Foundation Grant P403-18-04735S.

\bibliographystyle{siamplain}
\bibliography{references}
\end{document}